\newcommand{\prs}{\langle\;,\;\rangle}
\newcommand{\too}{\longrightarrow}
\newcommand{\om}{\omega}
\newcommand{\esp}{\quad\mbox{and}\quad}
\newcommand{\X}{{\cal X}}
\newcommand{\G}{{\mathfrak{g}}}
\newcommand{\h}{{\mathfrak{h}}}
\newcommand{\tr}{{\mathrm{tr}}}
\newcommand{\ric}{{\mathrm{ric}}}
\newcommand{\Ri}{{\mathrm{Ric}}}
\newcommand{\Ku}{{\mathrm{K}}}
\newcommand{\B}{{\cal B}}
\newcommand{\na}{\nabla}
\newcommand{\al}{\alpha}
\newcommand{\be}{\beta}
\newcommand{\we}{\wedge}
\newcommand{\la}{\lambda}
\font\bb=msbm10
\def\B{\hbox{\bb B}}
\def\R{\hbox{\bb R}}
\def\C{\hbox{\bb C}}
\newtheorem{Def}{Definition}[section]
\newtheorem{theo}{Theorem}[section]
\newtheorem{pr}{Proposition}[section]
\newtheorem{Le}{Lemma}[section]
\newtheorem{co}{Corollary}[section]
\newtheorem{rem}{Remark}
\title{\bf \textbf{ Four-dimensional homogeneous semi-symmetric neutral manifolds}}
\author{\textbf{A. Benroummane}\\ University   HASSAN 1. PB :218 Berrechid.\\Morocco \\abderrazzak.benroummane@uhp.ac.ma
 }
\begin{document}

\maketitle

\textbf{Abstract.} 

We  determine
all four-dimensional homogeneous semi-symmetric neutral manifolds.

\textbf{Keyword.}

 Homogeneous neutral manifolds.    Semi-symmetric neutral manifolds. 

\textbf{MSC} 2020: 53C30, 53C35
\newpage

\section{Introduction}\label{section1}

A pseudo-Riemannian manifold $(M,g)$ is said to be semi-symmetric if its Riemannian curvature tensor $R$ satisfies $R.R=0$. This is equivalent to
\begin{equation}\label{eq1}
[R(X,Y),R(Z,T)]=R(R(X,Y)Z,T)+R(Z,R(X,Y)T),
\end{equation}
for any vector fields $X,Y,Z,T$.

 Semi-symmetric pseudo-Riemannian manifolds generalize obviously locally symmetric manifolds ($\na R=0$). They also generalize second-order locally symmetric manifolds ($\na^2R=0$ and $\na R\not=0$).  Semi-symmetric Riemannian manifolds have been first investigated by E. Cartan \cite{cartan} and the first example of a semi-symmetric not locally symmetric Riemannian manifold was given by Takagi \cite{takagi}. From 1983 to 1985, Szabo \cite{zabo, zabo1} gave a complete description of these manifolds. Recently, the author with M. Boucetta and A. Ikemakhen give a complet classification of 
 four-dimensional homogeneous semi-symmetric Lorentzian manifolds \cite{benromane}.
 More recently,  A. Haji-Badali and A. Zaeim give a complet classification of 
 four-dimensional   semi-symmetric neutral Lie groups 
\cite{ali}.

This paper is devoted to the study of semi-symmetric curvature algebraic tensors on the   vector space with metric of signature $(2,n)$ such that $n\geq2$ and a classification of 4-dimensional simply-connected semi-symmetric  homogeneous neutral manifolds which are not locally symmetric. There are our main results:
\begin{enumerate}
	\item Let $(V,\prs)$ be a vector space  with metric of signature $(2,n)$ and $\Ku:V\wedge V\too V\wedge V$ a semi-symmetric algebraic curvature tensor, i.e., $\Ku$ satisfies the algebraic Bianchi identity and \eqref{eq1}. Let $\Ri_K:V\too V$ be its Ricci operator. The main result here (see Proposition \ref{pr2} ) is that $\Ri_K$ has  at most two non-real complex eigenvalues and if we note  $\la_1,\ldots ,\la_r$ the non-zero real eigenvalues of $\Ri_K$, then one of the following situations is checked:
	\begin{enumerate}
	\item $\Ri_K$ only has real eigenvalues
and $V$  splits orthogonally:
	\begin{equation}\label{eq2} V=V_0\oplus V_{\la_1}\oplus\ldots\oplus V_{\la_r}, 
	\end{equation} where $V_{\la_i}=\ker(\Ri_K-\la_i\mathrm{Id}_V)$ and  $V_0=\ker(\Ri_K^2)$.
\item $\Ri_K$ has    two  non-real complex eigenvalues conjugated
  $z$  and $\bar{z}$. Then $V$ splits orthogonally:
	\begin{equation}\label{eq22} V=V_0\oplus V_c\oplus V_{\la_1}\oplus\ldots\oplus V_{\la_r}, 
	\end{equation}  
	where $V_{\la_i}=\ker(\Ri_K-\la_i\mathrm{Id}_V)$, $V_0=\ker(\Ri_K)$ and $V_c=\ker(\Ri_K^2-(z+\bar{z})\Ri_K+z\bar{z}\mathrm{Id}_V)$.
	
	Moreover, $\dim V_{\la_i}\geq2$, $\dim V_c=4$, $\Ku(V_{\la_i},V_{\la_j})=\Ku(V_0,V_{\la_i})=0$ for $i\not=j$,  $\Ku(u,v)(V_{\la_i})\subset V_{\la_i}$ and $\Ku(u,v)(V_{0})\subset V_{0}$.
	\end{enumerate}
	This reduces the study of semi-symmetric algebraic curvature tensors to the ones which are Einstein's ($\Ri_K=\la\mathrm{Id}_V$) or the ones which are Ricci's isotropic  ($\Ri_K\not=0$ and $\Ri_K^2=0$) or on the four dimensional neutral space such that $V=\ker(\Ri_K^2-(z+\bar{z})\Ri_K+z\bar{z}\mathrm{Id}_V)$.
	\item In second main proposition (see Proposition\ref{pr43}), we  give the list of the semi-symmetric curvature tensor on a four dimensional neutral space $(V,\prs)$.
	\item  In \cite{derd}, Derdzinsky gave a classification of four dimensional pseudo-Riemannian Einstein manifolds whose curvature treated as a complex linear operator is diagonalizable and has constant eigenvalues. In \cite{komrakov}, komrakov gave all   Lie subalgebra of $so(2,2)$. Based on these  results, we prove the following two results.
	
	\begin{theo}\label{theo2} A  four-dimensional Einstein neutral manifold with non null  scalar curvature is semi-symmetric if and only if it is locally symmetric.
		
	\end{theo}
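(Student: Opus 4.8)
\emph{Sketch of the argument.}
The implication from locally symmetric to semi-symmetric is immediate, since $\na R=0$ forces $R.R=0$; only the converse needs proof. The plan is to pass from the curvature tensor to the curvature operator, combine the pointwise classification of Proposition \ref{pr43} with the second Bianchi identity, and then invoke Derdzinsky's theorem.

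Since $(M,g)$ is Einstein of dimension $\geq3$, its scalar curvature $s$ is a non-zero constant and $\na\Ri=0$, so at every point the curvature tensor is an algebraic curvature tensor whose Ricci operator is $\frac{s}{4}\mathrm{Id}$; thus we are in case (a) of Proposition \ref{pr2} with $V=V_{s/4}$. In dimension four with neutral signature the Hodge star acts on $2$-forms with $*^2=\mathrm{Id}$, giving an orthogonal splitting $\Lambda^2T_pM=\Lambda^+\oplus\Lambda^-$ into two $3$-dimensional Lorentzian subspaces; since $g$ is Einstein the curvature operator $\Ku$ commutes with $*$, hence preserves $\Lambda^\pm$, and $\Ku|_{\Lambda^\pm}=W^\pm+\frac{s}{12}\mathrm{Id}$ with $W^\pm$ the trace-free self-adjoint self-dual and anti-self-dual parts of the Weyl tensor. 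The first step is purely algebraic: restrict the list of Proposition \ref{pr43} to the Einstein tensors with $s\neq0$. I expect this to show that, at each point, both $W^+$ and $W^-$ are either zero or semisimple (diagonalizable over $\C$) with eigenvalues of a short list of patterns; in particular the complexified curvature operator is diagonalizable everywhere and the eigenvalue multiplicities are locally constant.

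The second step globalizes. For an Einstein metric the contracted second Bianchi identity gives $\delta W=0$, equivalently $\delta\Ku=0$; together with the fibrewise diagonalizability and the constant multiplicities just obtained, Derdzinsky's rigidity argument \cite{derd} forces the eigenvalues of $W^\pm$ to be locally constant. Then $(M,g)$ is a four-dimensional Einstein neutral manifold whose curvature operator is diagonalizable over $\C$ with constant eigenvalues, so Derdzinsky's classification \cite{derd} applies; running through its short list, and using Komrakov's enumeration \cite{komrakov} of the Lie subalgebras of $so(2,2)$ to pin down the holonomy representation in the reducible cases, one checks that every possibility with $s\neq0$ has $\na R=0$, which is the assertion.

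The main obstacle is the first step: on a Lorentzian $3$-space a trace-free self-adjoint operator may have a non-trivial nilpotent part (Segre types with a Jordan block of size $2$ or $3$), and such types do occur for semi-symmetric Einstein neutral metrics with $s=0$ (e.g.\ Ricci-flat Walker-type examples), so one must show that precisely the hypothesis $s\neq0$ forbids them, together with controlling the coupling imposed by \eqref{eq1} between the $\Lambda^+$ and $\Lambda^-$ factors; this is exactly what Proposition \ref{pr43} is built to supply. A secondary point is to check that Derdzinsky's rigidity lemma and classification, and the holonomy discussion via \cite{komrakov}, go through when the eigenvalues of $W^\pm$ are genuinely complex, a situation absent in the Riemannian case.
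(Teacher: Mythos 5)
Your proposal is correct and follows essentially the same route as the paper: constancy of the scalar curvature from the contracted Bianchi identity, the pointwise classification of semi-symmetric Einstein curvature tensors (Proposition \ref{pr43}, Corollary \ref{co4-21}) to see that the (anti-)self-dual curvature operator is complex-diagonalizable with eigenvalues that are explicit multiples of the constant scalar curvature, and then Derdzinski's theorem. The only deviation is inessential: since the eigenvalues are already constant by the classification, no separate rigidity step is needed, and the paper concludes by the contrapositive of Theorem \ref{derd} — $\na R^{+}\neq 0$ would force one of Petrov's Ricci-flat metrics, which is excluded by the non-vanishing of the scalar curvature.
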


	\begin{theo}\label{main}  Let $M$ be a    simply connected homogeneous semi-symmetric  4-dimensional  neutral manifold. If the Ricci tensor of $M$ has a non zero eigenvalue  in $\C$, then $M$ is symmetric and in this case it is a product of a space of constant curvature and a Cahen-Wallach space or it admits a complex structure.
	\end{theo}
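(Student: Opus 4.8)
The plan is to reduce this differential-geometric statement to the pointwise algebraic classification of Proposition \ref{pr2}, and then to control the homogeneous structure through Komrakov's list of subalgebras of $\mathfrak{so}(2,2)$, using Theorem \ref{theo2} and Proposition \ref{pr43} along the way. Fix a point $o\in M$, set $V=T_oM$ with its metric $\langle\;,\;\rangle$ of signature $(2,2)$, and let $\Ri$ be the Ricci operator; by homogeneity its eigenvalues, and the whole algebraic type of the curvature tensor, are constant on $M$. That $\Ri$ has a non-zero eigenvalue in $\C$ means $\Ri$ is not nilpotent, so by Proposition \ref{pr2} exactly one of the following occurs: (a) $\Ri$ has only real eigenvalues and $V$ splits orthogonally $V=V_0\oplus V_{\la_1}\oplus\cdots\oplus V_{\la_r}$ with $r\geq1$ and each summand non-degenerate; or (b) $\Ri$ has a non-real eigenvalue $z=\al+i\be$ with $\be\neq0$, and then, since $\dim V_c=4=\dim V$, we have $V=V_c$ and $(\Ri-\al\,\mathrm{Id})^2=-\be^2\mathrm{Id}$.

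In case (a) I would use that, by Proposition \ref{pr2}(a) together with the semi-symmetry identity \eqref{eq1}, the curvature is block-diagonal with respect to the orthogonal splitting; with homogeneity this forces $M$ to be, locally and then (by simple connectedness) globally, a pseudo-Riemannian product of the Einstein factors carried by the $V_{\la_i}$ and of the Ricci-isotropic factor carried by $V_0$. A dimension count excludes a $1$-dimensional factor with a non-zero eigenvalue, so the Einstein factors have dimension $2$, $3$ or $4$: in dimension $2$ or $3$ an Einstein manifold has constant curvature, hence is symmetric; in dimension $4$ it is Einstein with non-null scalar curvature and semi-symmetric, hence locally symmetric by Theorem \ref{theo2}; and the $V_0$-factor, of dimension at most $2$, is Ricci-isotropic and semi-symmetric, hence flat. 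Thus every factor is symmetric, $M$ is symmetric, and inspecting the list of factors shows that $M$ is either the product of a space of constant curvature with a flat (Cahen--Wallach) factor, or contains a non-flat surface factor or an irreducible $4$-dimensional Einstein symmetric factor; in those latter situations one checks directly, from the classification of $4$-dimensional neutral Einstein symmetric spaces, that $M$ carries a complex structure.

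In case (b) the self-adjoint operator $\Ri$ provides, via $J:=\be^{-1}(\Ri-\al\,\mathrm{Id})$, an invariant almost complex structure which is self-adjoint, hence satisfies $\langle JX,JY\rangle=-\langle X,Y\rangle$ for all $X,Y$: an invariant anti-Hermitian (Norden) structure. Every element of the isotropy commutes with $\Ri$, hence with $J$, so the isotropy algebra $\h$ is contained in the subalgebra of $\mathfrak{so}(2,2)$ commuting with $J$, which is isomorphic to $\mathfrak{so}(2,\C)$ and is abelian of real dimension $2$; hence $\dim\h\leq2$ and $\dim G\leq6$. I would then run through the subalgebras of $\mathfrak{so}(2,2)$ in Komrakov's classification that lie in this copy of $\mathfrak{so}(2,\C)$, and for each admissible isotropy pair determine the invariant metrics, compute the Levi-Civita connection and curvature, and impose both \eqref{eq1} and the shape of the curvature tensor given by Proposition \ref{pr43}; the surviving models should all satisfy $\na R=0$. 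Once $M$ is known to be symmetric we get $\na\Ri=0$, hence $\na J=0$, so $J$ is integrable and $M$ admits a complex structure.

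The main difficulty is case (b): there $M$ is not Einstein, so Theorem \ref{theo2} does not apply, and Propositions \ref{pr2} and \ref{pr43} only pin down the curvature tensor at one point; establishing $\na R=0$ therefore genuinely requires going through the admissible homogeneous models and showing that the Ricci-eigenvalue constraint, combined with \eqref{eq1}, leaves no room for a non-parallel curvature. A secondary technical point, in case (a), is to justify that the curvature-invariant non-degenerate distributions coming from the Ricci splitting are parallel on a homogeneous semi-symmetric manifold, so that the de Rham-type decomposition applies, and to identify the low-dimensional Ricci-isotropic homogeneous semi-symmetric factors with flat spaces.
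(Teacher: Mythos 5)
Your case (a) rests on a de Rham--type product decomposition that you do not actually have. From Proposition \ref{pr2} and semi-symmetry you only get that the Ricci eigendistributions are curvature-invariant and that the covariant derivative satisfies the partial inclusions of Proposition \ref{pr121}, namely $\na_{V_j}V_i\subset V_i$, $\na_{V_i}V_i\subset V_0+V_i$ and $\na_{V_i}V_0\subset V_0+V_i$; the paper explicitly remarks after that proposition that $V_0$ and $V_0+V_i$ are involutive but \emph{not necessarily parallel}. So precisely in the types with $\ker(\Ri^2)\neq 0$ (the types $(S4\la0^1)$ and $(S4\la0^2)$, which are allowed under your hypothesis that only \emph{some} eigenvalue is non-zero), homogeneity does not force $M$ to split as a product, and your conclusion that the $V_0$-factor is a flat factor and that every factor is symmetric is unsupported. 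This is not a ``secondary technical point'': it is the heart of the proof, and the paper replaces it by Lemma \ref{lemma61} (via the structure theorems of \cite{calvaruso3} and Komrakov's list), which shows that in these types $M$ is either Ricci-parallel --- in which case Boubel's splitting theorem \cite{boubel} applies and the factors are handled as in \cite{benromane} --- or locally isometric to a Lie group with left-invariant metric, in which case the separate analysis of Section \ref{section4bis} (Proposition \ref{pr45}) gives local symmetry. Only the pure type $(S4\la)$ is settled by Theorem \ref{theo2}, as you do.

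In case (b) your construction of $J=\be^{-1}(\Ri-\al\,\mathrm{Id})$ and the bound on the isotropy are fine, but the decisive step --- that every surviving homogeneous model satisfies $\na R=0$ --- is exactly what you announce as a computation ``to be run through'' and never carry out; as written this is a program, not a proof, and you acknowledge as much. The paper's argument here is different and does not need that case-by-case check: it complexifies, observes that $\Ri^{\C}$ is diagonalizable with the two constant non-zero eigenvalues $z,\bar z$ and \emph{no} kernel part, so the inclusions of Proposition \ref{pr121} degenerate into genuine parallelism of the two complex eigendistributions ($\na_{V_j}V_i\subset V_i$ and $\na_{V_i}V_i\subset V_i$ since $V_0=0$), whence each two-dimensional factor, and then $(M^{\C},g^{\C})$ and finally $(M,g)$, is locally symmetric; the parallelism of $\Ri$ then makes your $J$ parallel and yields the complex structure. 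If you want to keep your route, you must actually perform the Komrakov-type verification in case (b) and supply a genuine parallelism or structure argument in the types with $\ker(\Ri^2)\neq0$ in case (a); otherwise both branches of your proof have real gaps.
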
 
	
	We start in Section \ref{section4bis} by proving Theorem \ref{main} when $M$ is Lie group endowed with a left invariant  metric.   In Section \ref{section3}, we prove Theorems \ref{theo2} and \ref{main}.

	\item Having Theorem \ref{main} in mind, to complete the classification of  simply connected four-dimensional homogeneous semi-symmetric neutral manifolds, we determine
	all simply connected   four-dimensional semi-symmetric homogeneous   neutral no flat  manifolds with isotropic Ricci curvature  or flat Ricci.  
	\end{enumerate}
	
 The computations in Sections \ref{section4}, \label{section4bis} and \ref{section7} have been performed using a computation software.

     Note that a neutral space is a space equipped with metric of signature $(n,n)$. 


\section{Semi-symmetric curvature tensors on pseudo-riemannian vector spaces} \label{section2}

In this section,  we prove the first result listed in the introduction.

Let $(V,\prs)$ be a $n$-dimensional pseudo-Riemannian  vector space. We identify $V$ and  its dual $V^*$ by the means of $\prs$. This implies that the Lie algebra $V\otimes V^*$ of endomorphisms of $V$ is identified with $V\otimes V$, the Lie algebra $\mathrm{so}(V,\prs)$ of skew-symmetric endomorphisms is identified with $V\wedge V$ and the space of symmetric endomorphisms is identified with $V\vee V$ (the symbol $\wedge$ is the outer product and $\vee$ is the symmetric product). For any $u,v\in V$, 
\[ (u\wedge v)w=\langle v,w\rangle u-\langle u,w\rangle v\esp (u\vee v)w=\frac12\left(\langle v,w\rangle u+\langle u,w\rangle v\right). \]
Through this paper, we denote by $A_{u,v}$ the endomorphism $u\wedge v$.
On the other hand, $V\wedge V$ carries  also a nondegenerate symmetric product also denoted by $\prs$ and given by
\[ \langle u\wedge v,w\wedge t\rangle:=\langle u\wedge v(w), t\rangle=
\langle v,w\rangle\langle u,t\rangle-\langle u,w\rangle\langle v,t\rangle. \]
We identify $V\wedge V$ with its dual by means of this metric.

We consider $ B $ the linear Bianchi application on the space $ P = (\we^2 V) \vee (\we^2 V) $ given by:

  \begin{equation}\label{eq1-4}B((a\we b)\vee (c\we d))=(a\we b)\vee (c\we d)+(b\we c)\vee (a\we d)+(c\we a)\vee (b\we d).\end{equation}
  Let be  $ \G $  a  subalgebra   of $so(V)$   and    the action of $\G$ on $P$ given  by $$ A.T : \quad (u\we v)\mapsto A.T(u\we v)=[A,T(u\we v)]-T(A(u)\we v)-T(u\we A(v)),$$  for all $(A,T)\in \G\times P $. 
  
 We  pute:
  \[ R(\G):=\ker(B_{/\G})=\{T \in \G\vee \G /~~ B(T)=0\} \]and
    \[\G_{sym}=\{T\in R(\G) / \G.T=0\}.\]
 -The space  $R(\G)$  is called space of {\it curvature tensor } of type $\G$ and     we say   {\it curvature tensor } of $V$ each element  of $R(so(V))$.\\
   -The space $\G_{sym}$ is called   space of  symmetric curvature tensor  of  type $\G$.

      According to the identifications mentioned above, we obtain:\\
A {\it curvature tensor} on    $(V,\prs)$ is a   $\mathrm{K}\in (V\wedge V)\vee (V\wedge V)$ (i.e. $\Ku$ is a symmetric endomorphism of $V\wedge V$) satisfying the algebraic Bianchi's identity:
 \[\mathrm{K}(u, v)w+\mathrm{K}(v, w)u+\mathrm{K}(w, u)v=0,\quad u,v,w\in V.\]
The Ricci curvature associated to $\mathrm{K}$ is the symmetric bilinear form on $V$ given by $\ric_K(u,v)=\tr(\tau(u,v))$, where $\tau(u,v):V\too V$ is given by $\tau(u,v)(a)=\mathrm{K}(u, a)v$. The  Ricci operator is the symmetric endomorphism $\mathrm{Ric}_{\Ku}:V\too V$ given by  $\langle
\mathrm{Ric}_K(u),v\rangle =\ric_{\Ku}(u,v)$.  We call ${\Ku}$ Einstein (resp. Ricci isotropic) if $\Ri_K=\la\mathrm{Id}_V$ (resp. $\Ri_{\Ku}\not=0$ and $\Ri_{\Ku}^2=0$).
 Note that if $\Ku=(u\wedge v)\vee(w\wedge t)$ then 
\[ \ric_K=\langle u,w\rangle t\vee v+\langle v,t\rangle u\vee w-\langle v,w\rangle t\vee u-
\langle u,t\rangle v\vee w.\]
We denote by  ${\mathfrak{h} }(\Ku)$ the vector subspace of $V\wedge V$ image of ${\Ku}$, i.e., ${\mathfrak{h} }(\Ku)=\mathrm{span}\{\Ku(u,v)/\;u,\;v\in V\}$. The Lie algebra genrated by  ${\mathfrak{h} }(\Ku)$ is called  {\it primitive holonomy algebra} of $\Ku$.\\
A curvature tensor $\mathrm{K}$ is called {\it semi-symmetric} if it is invariant by ${\mathfrak{h} }(\Ku)$, i.e.,
\[\mathrm{K}(u, v).\mathrm{K}=0,\quad \forall (u,v)\in V^2.\]
This is equivalent to
\begin{equation}\label{semi}
[\mathrm{K}(u, v),\mathrm{K}(a, b)]=\mathrm{K}(\mathrm{K}(u, v)a,b)+\mathrm{K}(a,\mathrm{K}(u,v)b),\quad \forall (u,v,a,b)\in V^4.
\end{equation} In this case, ${\mathfrak{h} }(\Ku)$ is a Lie subalgebra of $\mathrm{so}(V,\prs)$ and it is   {\it primitive holonomy algebra} of $\Ku$.
\begin{rem}
  If $\;\mathrm{K}\;$ is semi-symmetric, then its Ricci operator is also invariant by ${\mathfrak{h} }(\Ku)$, i.e., 
\begin{equation}\label{semi1} \mathrm{K}(u,v)\circ\mathrm{Ric}_\Ku=\mathrm{Ric}_\Ku\circ \mathrm{K}(u,v),\quad \forall (u,v)\in V^2.
\end{equation} The Ricci operator is said {\it semi-symmetric } if it is satisfaying \ref{semi1}.
\end{rem}

  In \cite{boubel}, C. Boubel  gave the following theorem:

\begin{theo}\label{boubel}(\cite{boubel})
  Let $(M,g)$ a pseudo-Riemannian  manifold with   parallel Ricci (i.e, $\na.\Ri=0$) and let  $\chi$ be minimal polynomial of $\Ri$. Then,   the following properties are checked:
   \begin{enumerate}
            \item $\chi=\Pi_iP_i$ with :\begin{enumerate}
            \item [$\bullet$] $\forall  i\neq j$, $P_i\we P_j=1 $ (i.e, $P_i$ and $P_j$ are mutually prime),
     \item [$\bullet$] $\forall  i$, $P_i$ is irreducible or  $P_i=X^2$.
   \end{enumerate}
   \item There is a canonical family $(M_i)_i$ of pseudo-Riemannian manifolds such that the minimal
polynomial of $\Ri_i = \Ri_{M_i}$ on each $M_i$ is $P_i$, and a local isometry $f$ mapping the
Riemarmian product $\Pi M_i$ onto M. $f$ is unique up to composition with a product of
isometries of each factor $M_i$. If $M$ is complete and simply connected, $f$ is an isometry.

That is to say, $M$ splits canonically into a Riemannian product, with factors $M_i$ of one
of the four following types- we denote by $P_i$ the minimal polynomial of $\Ri_i$, the Ricci
endomorphism of $M_i$:
\begin{enumerate}
\item if $P_i = (X - \al_i)^k$ with $\al_i  \neq0$, then $k = 1$, i.e. $M_i$ is Einstein,
\item if $P_i = X^k$, then $k \leq 2$, so
\begin{enumerate}
\item[i)] either $k = 1$, i.e. $M_i$ is Ricci-flat, [which is a particular case of Einstein],
\item[ii)] or Ricci is nilpotent of index 2,  \end{enumerate}
\item if $P_i = (X^2 + p_iX + q_i)^k$ (power of an irreducible), then $k = 1$, so $\Ri_i$ has no nilpotent
part but is not diagonalizable on $\R$. Then $M_i$ is  a complex
Riemannian manifold, which is Einstein for this structure.\\
The last two types do not appear in the Riemannian case.
        \end{enumerate}  \end{enumerate}
\end{theo}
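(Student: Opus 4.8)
The plan is to reduce the statement to an algebraic decomposition of the parallel operator $\Ri$, then to split $M$ geometrically, and finally to constrain the exponents appearing in the minimal polynomial on each factor; the last step is the substantive one.

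First I would exploit that $\na\Ri=0$ makes $S:=\Ri$ and every polynomial $Q(S)$ holonomy--invariant. Writing the minimal polynomial of $S$ as $\chi=\prod_iP_i^{m_i}$ with the $P_i$ pairwise coprime and irreducible over $\R$, the distributions $E_i:=\ker P_i(S)^{m_i}$ are parallel and $TM=\bigoplus_iE_i$. The point I would check is that this sum is $g$--orthogonal and that each $E_i$ is nondegenerate: for $x\in E_i$ and $y\in E_j$ with $i\neq j$, Bézout yields polynomials with $a(X)P_i(X)^{m_i}+b(X)P_j(X)^{m_j}=1$, so $y=a(S)P_i(S)^{m_i}y$, and since $S$ — hence $a(S)$ and $P_i(S)^{m_i}$ — is $g$--self-adjoint, $g(x,y)=g(P_i(S)^{m_i}a(S)x,y)=0$; nondegeneracy of each $E_i$ then follows because $g$ is nondegenerate and the decomposition is orthogonal.

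Next, since the $E_i$ are parallel, mutually orthogonal, nondegenerate, and span $TM$, I would invoke the de Rham--Wu splitting theorem — which applies precisely because the $E_i$ are nondegenerate — to get that $M$ is locally a pseudo-Riemannian product $\prod_iM_i$ with $TM_i=E_i$, unique up to reordering and isometries of the factors, and a genuine global product when $M$ is complete and simply connected. Because the Ricci operator of a product is the direct sum of the factors' Ricci operators and $S$ preserves each $E_i$, the operator $\Ri_{M_i}=S|_{E_i}$ has minimal polynomial exactly $P_i^{m_i}$. This reduces everything to the case $\chi=P^m$ with $P$ irreducible, and part (1) of the statement will then be exactly the claim that $m\le1$ unless $P=X$, in which case $m\le2$ — i.e.\ part (2).

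For the final step I would set, on a factor with $\chi=(X-\al)^m$, $N:=S-\al\,\mathrm{Id}$, which is parallel, self-adjoint and nilpotent with $N^{m-1}\neq0$; and, on a factor with $\chi=(X^2+pX+q)^m$ and $m=1$, normalise $S=a\,\mathrm{Id}+bJ$ with $J$ parallel, self-adjoint, $J^2=-\mathrm{Id}$. Such a $J$ satisfies $g(JX,JY)=g(X,J^2Y)=-g(X,Y)$, hence is a parallel anti-isometric almost complex structure: the factor carries a holomorphic (``complex'') Riemannian structure for which $S=a\,\mathrm{Id}+bJ$ is precisely the Einstein condition, and in the real case $m=1$ just says the factor is Einstein (Ricci--flat if $\al=0$). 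What remains is to show that the nilpotent part cannot be too long: $N=0$ when $\al\neq0$, $N^2=0$ when $\al=0$, and $m=1$ in the irreducible--quadratic case. The hard part will be exactly this: a nonzero parallel section of $\mathrm{im}(N^{m-1})$ (or the analogous nilpotent piece in the complex case) spans a parallel totally isotropic subbundle — isotropic since $g(N^{m-1}x,N^{m-1}y)=g(N^{2(m-1)}x,y)=0$ for $m\ge2$ — and one must play this off against the identity $R(X,Y)\circ S=S\circ R(X,Y)$ (immediate from $\na S=0$), the fact that $S$ is a metric contraction of $R$, and the first and second Bianchi identities, to force the isotropic tower to collapse beyond length two and to vanish altogether whenever $S$ is invertible or carries a genuine complex eigenvalue. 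This curvature/Ricci compatibility analysis is the technical heart; once it is in place, parts (1) and (2) follow by bookkeeping, and the mutual exclusivity of the four types is immediate from $\chi=P_i^{m_i}$ together with the orthogonal decomposition obtained above.
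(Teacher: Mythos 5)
First, note that the paper gives no proof of this statement: Theorem \ref{boubel} is quoted verbatim from Boubel's thesis \cite{boubel}, and the paper only remarks afterwards which hypothesis of Boubel's argument it will reuse. So the only question is whether your sketch amounts to a proof on its own. Your preparatory steps are correct and standard: $\nabla\mathrm{Ric}=0$ makes every polynomial in $\mathrm{Ric}$ parallel, the B\'ezout argument shows the generalized kernels $E_i=\ker P_i(\mathrm{Ric})^{m_i}$ are mutually $g$-orthogonal (hence each nondegenerate), they are parallel, and Wu's splitting theorem gives the local (and, under completeness and simple connectedness, global) product decomposition with $\mathrm{Ric}_{M_i}=\mathrm{Ric}|_{E_i}$ of minimal polynomial $P_i^{m_i}$. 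This reduces the theorem to the exponent bounds on a single factor, exactly as in the literature.

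The genuine gap is that those exponent bounds — $k=1$ when $\alpha_i\neq 0$, $k\le 2$ when $P_i=X$, and $k=1$ in the irreducible quadratic case — are the entire content of the theorem, and you explicitly leave them unproved ("the hard part will be exactly this", "one must play this off against..."). The commutation $[R(X,Y),\mathrm{Ric}]=0$, which you do establish, does not by itself bound the nilpotency index: a parallel self-adjoint nilpotent operator commuting with all curvature endomorphisms can a priori have any index, and the collapse of the isotropic tower comes from combining this commutation with the fact that $\mathrm{Ric}$ is a metric contraction of $R$ — precisely the algebraic analysis the paper attributes to Boubel and does not reproduce either. Two smaller points: in the quadratic case you write $S=a\,\mathrm{Id}+bJ$ only after assuming $m=1$, which is among the things to be proved; and $\mathrm{im}(N^{m-1})$ is a parallel isotropic distribution but need not carry a nonzero parallel section, so that phrase should be reworded. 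As it stands, the proposal is a correct reduction plus an accurate description of the remaining work, not a proof of the statement.
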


In the proof of the first result of this theorem, \textbf{C. Boubel} used only the following algebraic hypothesis: On each tangent space $ T_xM $ at the point $ x \in M $, the Ricci operator $ \Ri_x $ commutes with   each endomorphisms $ R_x (u, v) $ for all $ u $, $ v \in T_xM $.  that is said Ricci operator is semi-symmetric which was verified  for  spaces with semi-symmetric curvature. Then, we get the following results:

 \begin{pr}\label{pr1-3-1}
Let $\Ku$ be a  semi-symmetric curvature tensor on the   pseudo-Riemannian  space  $(V,\prs)$  and let   $\chi$ be a minimal polynomial  of    $\Ri_{\Ku}$. Then, the following properties are checked:
   \begin{enumerate}
            \item $\chi=\Pi_iP_i$,  with;\begin{enumerate}
            \item [$\bullet$] $\forall  i\neq j$, $P_i\we P_j=1 $ (i.e, $P_i$ and $P_j$ are mutually prime),
     \item [$\bullet$] $\forall  i$, $P_i$ is irreducible or  $P_i=X^2$.
   \end{enumerate}
\item $V$ splits orthogonally: \begin{equation}\label{dec-V-Ric} V=V_0\oplus V_1\oplus\ldots\oplus V_r,\end{equation}   where $V_0=\ker((\mathrm{Ric}^2))$ and    $V_i=\ker(P_i(\mathrm{Ric}))$. \\
Moreover,    the following situations is verified:
\begin{enumerate}
\item[a)] for all  $u,v\in V$  and $i\in\{0,\ldots,r\}$, $V_i$ is ${\mathfrak{h} }(\Ku)$-invariant,,
\item[b)]  for all  $i,j\in\{0,\ldots,r\}$ with  $i\not=j$, $\Ku_{|V_i\wedge V_j}=0$,
\item[c)]  for all  $i=1,\ldots,r$, $\dim V_i\geq2$.
\end{enumerate}
\end{enumerate}
 \end{pr}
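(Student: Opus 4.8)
The plan is as follows. Write $\Ri:=\Ri_{\Ku}$, a symmetric endomorphism of $(V,\prs)$; by the Remark above, relation \eqref{semi1} says that $\Ri$ commutes with every $\Ku(u,v)$, hence with every element of $\h(\Ku)$. Factor the minimal polynomial $\chi=\prod_kQ_k^{m_k}$ into powers of pairwise distinct monic irreducible real polynomials and form the primary decomposition $V=\bigoplus_k\ker\big(Q_k^{m_k}(\Ri)\big)$. Two features are elementary: the summands are mutually orthogonal, because $\Ri$ is self-adjoint and the $Q_k^{m_k}$ are pairwise coprime, so each carries a nondegenerate induced metric; and each summand is $\h(\Ku)$-invariant, because $\Ri$ commutes with $\h(\Ku)$. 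The non-elementary input is the multiplicity bound: $m_k=1$ whenever $Q_k\neq X$, and $m_k\le 2$ when $Q_k=X$. This is exactly the first assertion of Boubel's Theorem \ref{boubel}, and — as stressed just before the present statement — Boubel's proof of it uses nothing about $\Ri$ beyond its self-adjointness and the fact that it commutes with all the curvature operators, i.e. beyond \eqref{semi1}; so I would simply transcribe that proof into the present algebraic setting. Granting the bound, relabel the $X$-primary component as $V_0$ (with $V_0=\{0\}$ if $X\nmid\chi$) and the remaining components as $V_i=\ker(P_i(\Ri))$ with $P_i$ irreducible, $P_i\neq X$; since $\Ri^2=0$ on the $X$-primary component while $\Ri$ is invertible on every $V_i$ with $i\ge1$, one has $V_0=\ker(\Ri^2)$. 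This yields assertion 1 and the orthogonal $\h(\Ku)$-invariant splitting \eqref{dec-V-Ric}, hence property (a).

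For (b) I would use the pair symmetry of $\Ku$. Since $\Ku$ is a symmetric endomorphism of $\big(V\wedge V,\prs\big)$, unwinding the definition of the metric on $V\wedge V$ gives $\langle\Ku(u,v)w,t\rangle=\langle\Ku(w,t)u,v\rangle$ for all $u,v,w,t\in V$. Fix now $i\neq j$, $u\in V_i$, $v\in V_j$, and let $w,t\in V$ be arbitrary: by (a) the operator $\Ku(w,t)\in\h(\Ku)$ preserves $V_i$, so $\Ku(w,t)u\in V_i$, and since $V_i\perp V_j\ni v$ the right-hand side vanishes. Hence $\langle\Ku(u,v)w,t\rangle=0$ for all $w,t$, so by nondegeneracy of $\prs$ we get $\Ku(u,v)=0$. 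As the bivectors $u\wedge v$ with $u\in V_i$, $v\in V_j$ span $V_i\wedge V_j$, this is $\Ku_{|V_i\wedge V_j}=0$.

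For (c) I would argue by contradiction: suppose $\dim V_i=1$ for some $i\ge1$, say $V_i=\R e$. The induced metric on $V_i$ is nondegenerate, so $\langle e,e\rangle\neq0$; and since $P_i\neq X$ the operator $\Ri$ is invertible on $V_i$, hence $\Ri e=\la e$ with $\la\neq0$. For any $u,v\in V$ the operator $\Ku(u,v)$ is skew-symmetric and, by (a), preserves $V_i$, so it induces an element of $\mathrm{so}(V_i)=\{0\}$; therefore $\Ku(u,v)e=0$ for all $u,v\in V$. Consequently $\tau(e,e)(a)=\Ku(e,a)e=0$ for every $a\in V$, whence $\ric_K(e,e)=\tr\big(\tau(e,e)\big)=0$, contradicting $\ric_K(e,e)=\langle\Ri e,e\rangle=\la\langle e,e\rangle\neq0$. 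Hence $\dim V_i\ge2$.

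I expect the genuine obstacle to be the multiplicity estimate underlying assertion 1: once the orthogonal $\h(\Ku)$-invariant decomposition \eqref{dec-V-Ric} is in hand, properties (b) and (c) are short and formal, using only the pair symmetry of $\Ku$ and the definition of $\ric_K$; by contrast the bound on the nilpotency index of $\Ri$ on each primary component requires, as in Boubel, a careful use of the algebraic Bianchi identity together with \eqref{semi}, and the delicate point is to check that nothing in that argument uses more than the commutation of $\Ri$ with all the curvature operators, so that it survives the passage from a pseudo-Riemannian manifold with parallel Ricci to a bare semi-symmetric algebraic curvature tensor.
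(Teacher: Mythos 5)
Your proposal is correct and takes essentially the same route as the paper: assertion 1 is obtained by transposing Boubel's argument, which (as the paper notes) uses only the self-adjointness of $\Ri_{\Ku}$ and its commutation with the operators $\Ku(u,v)$, and the orthogonal, ${\mathfrak h}(\Ku)$-invariant primary decomposition then yields the splitting \eqref{dec-V-Ric}. Your short verifications of (b) via the pair symmetry $\langle \Ku(u,v)w,t\rangle=\langle \Ku(w,t)u,v\rangle$ and of (c) via skew-symmetry on a one-dimensional nondegenerate invariant subspace and the trace formula for $\ric_{\Ku}$ are sound and merely make explicit details the paper leaves implicit.
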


 We give now the main result of this section which gives a useful decomposition of semi-symmetric curvature tensors in a pseudo-Riemannian  space  with metric of signature $(2,n)$. 
 
\begin{pr}\label{pr2}   Let $\Ku$ be a  semi-symmetric curvature tensor on the   pseudo-Riemannian  space  $(V,\prs)$   with  metric of signature $(2,n)$ such that $n\geq2$.  Then the Ricci curvature $\Ri_{\Ku}$ admits at most two non-real eigenvalues. Denote by $\al_1,\ldots,\al_r$ the non-zero real eigenvalues and $V_1,\ldots,V_r$ the corresponding eigenspaces. Then one of the following situations is verified:\begin{enumerate}
    \item   $\Ri_{\Ku}$   has two non-real eigenvalues $z$ and $\bar{z}$ and $V$ splits   orthogonally  \[V= V_c\oplus V_0\oplus \ldots\oplus V_r,\] where  $V_0=\ker(\mathrm{Ric})$ and  $V_c=\ker(\Ri_{\Ku}^2-(z+\bar{z})\Ri_{\Ku}+|z|^2I) $.\\
         Moreover,    $~~\dim(V_c)=4$ and $V_i$ is a Riemannian  semi-symmetric space for all  $i\geq 0$.\\In this case, $\Ri_{\Ku}$   is said complex Ricci.
    \item  $\mathrm{Ric}_{\Ku}$ has only  real eigenvalues and   $V$ splits   orthogonally: $$V= V_0\oplus V_1\oplus \ldots\oplus V_r,~~ \text{where}~~~~V_0=\ker(\Ri_{\Ku}^2).$$

\end{enumerate}

\end{pr}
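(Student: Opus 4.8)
The plan is to take Proposition~\ref{pr1-3-1} as the starting point: it already provides the orthogonal splitting $V=V_0\oplus V_1\oplus\dots\oplus V_r$ with $V_0=\ker(\Ri_\Ku^2)$ and, for $i\ge1$, shows that the minimal polynomial of $\Ri_\Ku$ on $V_i$ is a factor $P_i$ of its minimal polynomial which is either linear with nonzero root or an irreducible quadratic; the blocks carrying non-real eigenvalues are exactly the latter, so all the new content is to bound these using the signature hypothesis. The first step is to observe that the splitting is compatible with $\Ku$: for $u,v\in V_i$ and $w\in V_j$ with $j\ne i$, the algebraic Bianchi identity together with property (b) of Proposition~\ref{pr1-3-1} gives $\Ku(u,v)w=-\Ku(v,w)u-\Ku(w,u)v=0$, so $\Ku$ is block-diagonal for the splitting. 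Consequently each $\Ku_i:=\Ku_{|V_i\we V_i}$ is a semi-symmetric algebraic curvature tensor on $(V_i,\prs_{|V_i})$ (one restricts \eqref{semi}), and its Ricci operator is $(\Ri_\Ku)_{|V_i}$, with minimal polynomial $P_i$.

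The core step is the claim that a block $V_i$ with $P_i=(X-a)^2+b^2$, $b\ne0$, has neutral signature $(m_i,m_i)$ where $\dim V_i=2m_i$, and moreover $m_i\ge2$. For the signature, put $J:=b^{-1}\big((\Ri_\Ku)_{|V_i}-a\,\mathrm{Id}\big)$; then $J$ is $\prs$-symmetric and $J^2=-\mathrm{Id}$, so $(\prs_{|V_i},J)$ is a Norden (anti-K\"ahler) pair. Complexifying $V_i$, the $(\pm i)$-eigenspaces of $J$ are totally isotropic and paired nondegenerately with one another by the complexified metric; this identifies $\prs_{|V_i}$ with the real part of a nondegenerate $\C$-bilinear symmetric form $\beta$ on $\C^{m_i}$, and writing $\beta=\sum_k z_k^2$ in suitable complex coordinates its real part is $\sum_k(\mathrm{Re}\,z_k)^2-\sum_k(\mathrm{Im}\,z_k)^2$, of signature $(m_i,m_i)$. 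In particular $\dim V_i$ is even; and $m_i=1$ is impossible, since on a $2$-dimensional pseudo-Euclidean space the space of algebraic curvature tensors is $1$-dimensional, forcing $\Ri_{\Ku_i}$ to be a real multiple of the identity, which cannot have minimal polynomial $(X-a)^2+b^2$. Hence $m_i\ge2$.

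The conclusion then assembles at once. As $(V,\prs)$ has signature $(2,n)$ with $n\ge2$, the smaller of its two indices equals $2$; since every block $V_i$ with $P_i$ irreducible quadratic contributes $m_i\ge2$ to that index and such contributions add (the blocks being mutually orthogonal), at most one such block can occur\,--\,that is, $\Ri_\Ku$ has at most two non-real eigenvalues. If one occurs, call it $V_c$: it contributes $m_c\le2$ to the index $2$, so $m_c=2$, whence $\dim V_c=4$, $\prs_{|V_c}$ has signature $(2,2)$, and $V_c=\ker\!\big(\Ri_\Ku^2-(z+\bar z)\Ri_\Ku+|z|^2\mathrm{Id}\big)$. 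Then $V_c^\perp=V_0\oplus V_1\oplus\dots\oplus V_r$ inherits the definite signature $(0,n-2)$, so each $V_i$ ($i\ge1$) and $V_0$ carries a definite metric and is a Riemannian semi-symmetric space; in particular $(\Ri_\Ku)_{|V_0}$ is a $\prs$-symmetric nilpotent operator (as $V_0=\ker\Ri_\Ku^2$) on a definite space, hence zero, so $V_0=\ker\Ri_\Ku$. This is situation~(1). If no irreducible-quadratic factor occurs, each $P_i$ ($i\ge1$) is linear with nonzero root $\al_i$ and the splitting of Proposition~\ref{pr1-3-1} with $V_0=\ker\Ri_\Ku^2$ is precisely situation~(2).

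I expect the second paragraph to be the genuine obstacle: the whole statement rests on showing a ``complex-Ricci'' block is forced to be $4$-dimensional of signature $(2,2)$, which is what simultaneously gives the bound on the number of non-real eigenvalues and the value $\dim V_c=4$. The signature computation is routine linear algebra; the delicate point is excluding $\dim V_i=2$, which uses that $\Ri_\Ku$ arises from a curvature tensor, not merely that it is a symmetric operator.
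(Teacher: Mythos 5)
Your proof is correct, but it reaches the two key conclusions by a genuinely different mechanism than the paper. The paper's own argument pins down $V_c$ by elimination and construction: it rules out $V_c$ being definite (a symmetric operator on a definite space is real-diagonalizable) and rules out $V_c$ Lorentzian by invoking the Lorentzian semi-symmetric classification of \cite{benromane}, so $V_c$ has index $2$ and its orthogonal complement is definite (whence at most one conjugate pair of non-real eigenvalues); it then proves $\dim V_c=4$ by choosing an isotropic vector $e\in V_c$, analysing $\mathrm{span}\{e,\Ri(e)\}$ according to whether $\langle e,\Ri(e)\rangle=0$, and assembling $V_c$ from two $2$-dimensional pieces. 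You instead argue purely algebraically: the operator $J=b^{-1}(\Ri_{\Ku}-a\,\mathrm{Id})$ on a complex block is symmetric with $J^2=-\mathrm{Id}$, and since $\langle Jv,Jv\rangle=-\langle v,v\rangle$ the block must be neutral of signature $(m,m)$ (your complexification detour proves the same thing, though the one-line identity already suffices); then $m\le 2$ from the ambient index and $m\ge 2$ because on a $2$-dimensional block the space of algebraic curvature tensors is $1$-dimensional with Ricci proportional to the identity (this uses, correctly, that $\ric_{\Ku_i}$ coincides with the restriction of $\ric_{\Ku}$, which follows from property (b) of Proposition \ref{pr1-3-1}). This buys you a self-contained proof that does not lean on the external Lorentzian result, gives the neutrality of $V_c$ and the bound on the number of non-real eigenvalues in one stroke, and also supplies the parts the paper dismisses as easy (definiteness of $V_c^{\perp}$, $V_0=\ker\Ri_{\Ku}$ in case 1, and the Riemannian semi-symmetric nature of the remaining factors). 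The paper's route, by contrast, is more constructive about the internal structure of $V_c$ (the dual pair of planes), which it reuses implicitly later, but is less elementary at the points where it cites \cite{benromane}.
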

\begin{proof} $   $

We will only show the following result:
If $\Ri=\mathrm{Ric}_{\Ku}$  admits a non real eigenvalue  $z$, then,   $z$ and $\bar{z}$ are the only non-real eigenvalues of $\Ri$ and  $V_c=\ker(\Ri^2-(z+\bar{z})\Ri+|z|^2I) $ is the neutral  space of dimension $4$. The other  results are easy to proof.

 Now, we  suppose that  $\Ri$  admits a non-real eigenvalue  $z$ with the associated caractestic   subspace    $V_c=\ker(\Ri^2-(z+\bar{z})\Ri+|z|^2I) $, that subspace is a  pseudo-Riemannian  semi-symmetric of even dimension and
  necessarily, $V_c$    admits the metric of signature $(2,2p)$, 
   otherwise, we have  $V_c$ is a non degenerete space then, one and only one of  two following situations is checked:
    \begin{enumerate}
      \item $V_c$ is a  Riemannian space . So    $\Ri$ is a symmetric endomorphism and it's diagonalizable  admitting only the real eigenvalues,
      \item   $V_c$ is a  Lorentzian  semi-symmetric space. According to \cite{benromane},   $\Ri$ has only the real eigenvalues.
    \end{enumerate}
    Which is impossible in both situations.

 So,
       $(V_c)^{\perp}$  will be a Riemannian space which  $\Ri$  has only real eigenvalues.

        Now, we  choose a non-zero isotropic vector $e$ in $V_c$.
         Then, $(e,\Ri(e))$ is a free family in $V_c$, otherwise, $e$  will be an  eigenvector  associated  of a real eigenvalue of  $ \Ri$  on  $V_c$, which is impossible.

        On the other hand, the subspace $ V_c^e = span\{e, \Ri (e) \} $ generated by $ e $ and its image $ \Ri (e) $,  is totally isotropic if and only if $ e $ and $ \ Ri (e) $ are orthogonal. Then, one of the two following situations is verified:

         \begin{enumerate} \item[a)] If $e$ and $\Ri(e)$ are not  orthogonl (i.e $\langle e,\Ri(e)\rangle \not=0)$, then,  $V_c^e$ will be   a  Lorentzian space   stable by $\Ri$ and let  $\overline{V_c^e}$ be a subspace such that  $V_c$ splits orthogonally    $V_c=V_c^e\oplus \overline{V_c^e}$. Then $\overline{V_c^e}$ is a Lorentzian  subspace invariant by  $\Ri$ then it's semi-symmetric and necessarily, $\overline{V_c^e}$  is  of dimension $2$ and  $\dim({V_c})=4$.

          \item[b)] Now, if    $V_c^e=span\{e,\Ri(e)\}$ is totally  isotropic.\\
           We choose  $\bar{e}$ a dual vector of $\Ri(e)$  in  $V_c$. So, $\Ri(\bar{e})$ is a dual vector of $e$ and one of the two following situations is verified:\begin{enumerate}
                                                                   \item[b1)] $\bar{e}$ and $\Ri(\bar{e})$ are duals vectors. Then, $\overline{V_c^e}=vect\{\bar{e},~\Ri(\bar{e})\}$ is a Lorentzian subspace of $V_c$, stable by $\Ri$  and we come back to the case (a).
                                                                   \item[b2)] $\bar{e}$ and $\Ri(\bar{e})$ are orthogonals. Then, $\overline{V_c^e}=vect\{\bar{e},~\Ri(\bar{e})\}$ is a totally isotropic subspace of  $V_c$ and a dual subspace of  $V_c^e$ and
        $V_c= V_c^e \oplus\overline{V_c^e}$. This completes the proof of the proposition.

                                                                 \end{enumerate} \end{enumerate}

         \qedhere

\end{proof}


This proposition reduces  the determination of semi-symmetric curvature tensors on  vector space  equiped  with metric of signature $(2,n)$ to the determination of three classes of semi-symmetric curvature tensors: Einstein semi-symmetric curvature tensors,   semi-symmetric curvature with Ricci isotropic, semi-symmetric curvature tensor  with Ricci admiting two non-real eigenvalues conjugated on four dimensional neutral space.
   \section{  Semi-symmetric curvature tensor on four dimensional neutral space }\label{section4}
   
In this section we give  the classification of semi-symmetric curvature tensors on four dimensional  neutral vector spaces. 

 The idea  is the following: Let $\Ku$ be  curvature tensor on   pseudo-Riemannian vector space $(V,\prs)$. $\Ku$ is semi-symmetric curvature tensors if only if $\mathfrak{h}(\Ku).\Ku=0$ and the space $\mathfrak{h}(K)$ is  a Lie subalgebra of $\mathfrak{so}(V,\prs)$. So, one way to determine all semi-symmetric curvature tensors is given all proper subalgebras of $\mathfrak{so}(V,\prs)$ and for each one of them, say $\G$, determine all the curvature tensors $\Ku\in \G_{sym}$,i.e. $\Ku\in\G\vee \G$ satisfaying $B(\Ku)=0$ and  $\G.\Ku=0$ where $B$ is the linear Bianchi application.\\ 
First step, according of the classification of Lie subalgebra of  $\mathfrak{so}(2,2)$  given  by Komrakov  in\cite{komrakov} and for each   subalgebra  $\G$ of  $\mathfrak{so}(2,2)$,  we   give  
 all curvature  tensor and  all symmetric  curvature of type   $\G$.
\\ In second step,   we give a   list  of  non flat semi-symmetric  curvature  tensors on the  four dimensional neutral  vector space $(V,\prs)$ by giving the list of Lie subalgebras $\G\neq \{0\}$ of  $\mathfrak{s}o(V)$ such that  $\G=\G_{sym}$.
   \begin{theo} For each  Lie subalgebra  $\G$ of  $\mathfrak{so}(2,2)$,  the space $R(\G)$ of all curvature  tensor of type $\G$ and the space $\G_{sym}$ of all symmetric  curvature  tensor of type $\G$ are the following:

 \begin{enumerate}\item{$\dim \G=1$}:
 \begin{enumerate}
 \item $\G:1.1^1=\R \{A_{x,z}+a.A_{y,t}\}$, with $\langle x,z\rangle=\langle y,t\rangle=1$, $a\in [0,1]$\\
       If  $a=0$,  we get;  $R(\G)=\G_{sym}=\R\{ A_{x,z}\vee A_{x,z}\}.$  \\
        Otherwise, $ R(\G)=0$,
 \item  $\G:1.1^2=\R \{A_{x,z}+a.A_{y,t}\}$,  with  $\langle x,x\rangle=-\langle y,y\rangle=\langle z,z\rangle=-\langle t,t\rangle=1$, $a\in [0,1]$\\
     If  $a=0$,   we get;  $ R(\G)=\G_{sym}=\R\{A_{x,z}\vee A_{x,z}\}$.\\     Otherwise,    $ R(\G)=0$,
 \item $\G:1.2^1=\R \{A_{x,z}+A_{x,t}+A_{y,t}\}$,  with  $\langle x,z\rangle=\langle y,t\rangle=1$, \\
        $\; R(\G)=0$.
        \item $\G:1.2^2=\R \{A_{x,y}+A_{x,t}+A_{y,z}\}$,  with  $\langle x,z\rangle=\langle y,t\rangle=1$, \\
        $ \;R(\G)=0$.
 \item  $\G:1.3^1=\R A_{x,y}$,  with  $\langle x,t\rangle=\langle y,z\rangle=1$, \\
      $ R(\G)=\G_{sym}=\R.A_{x,y}\vee A_{x,y}$.
 \item  $\G:1.4^1=\R A_{x,y}$,  with  $\langle x,z\rangle=-\langle y,y\rangle=\langle t,t\rangle=1$, \\
       $ R(\G)=\G_{sym}=\R A_{x,y}\vee  A_{x,y}$.
  \item   $\G:1.1^5=\R\{ \cos(\phi)(A_{x,t}+A_{y,z})+\sin(\phi)(A_{y,z}+A_{t,y})\}$  with  $\langle x,t\rangle=\langle y,z\rangle=1$  and     $\phi\in ]0,\frac{\pi}{4}]$, \\
         $ R(\G)=0$,
       \item   $\G:1.1^6=\R\{ \cos(\phi)(A_{x,t}+A_{z,y})+\sin(\phi)(A_{x,z}+A_{y,t})\}$  with  $\langle x,z\rangle=\langle y,t\rangle=1$  and     $\phi\in ]0,\frac{\pi}{4}[$, \\
       $ \;  R(\G)=0$,
  \end{enumerate}
  \item{$\dim\G=2$}:
    \begin{enumerate}
 \item  $\G:2.1^1=vect\{ A_{x,z},~~A_{y,t}\}$,  with  $\langle x,z\rangle=\langle y,t\rangle=1$,\\
       $ R(\G)=\G_{sym} =span\{ A_{x,z}\vee  A_{x,z},~A_{y,t}\vee  A_{y,t} \} $.
 \item $\G:2.1^3=span\{ A_{x,z},~~A_{y,t}\}$,  with  $\langle x,x\rangle=-\langle y,y\rangle=\langle z,z\rangle=-\langle t,t\rangle=1$,\\
       $ R(\G)=\G_{sym} =span\{ A_{x,z}\vee  A_{x,z},~A_{y,t}\vee  A_{y,t}\} $.
 \item  $\G:2.1^4=span\{ \pi_1=A_{x,z}+A_{t,y},~~\pi_2=A_{x,t}+A_{y,z}\}$,  with  $\langle x,t\rangle=\langle y,z\rangle=1$,\\
       $ R(\G)=\G_{sym}=span\{  (\pi_1\vee \pi_1-\pi_2\vee \pi_2),~\pi_1\vee \pi_2  \}$.\\
\item $\G:2.2^1=span\{ \pi_1=A_{x,z}+aA_{y,t},~~\pi_2=A_{x,t}\}$  with  $\langle x,z\rangle=\langle y,t\rangle=1$  and     $a\in [-1,1]$,\\
        If  $a=0$, we get   $ R(\G)=span\{ \pi_1\vee \pi_1,~\pi_2\vee \pi_2,~\pi_1\vee \pi_2\}$, and     $\G_{sym}=0$.\\
          If  $a=1$,  we get   $ R(\G)=span\{ \pi_2\vee \pi_2,~~\pi_1\vee \pi_2 \}=\G_{sym}$.\\
           If  $a\neq1$  and     $a\neq 0$,  we get   $ R(\G)=span\{  \pi_2\vee \pi_2,~~\pi_1\vee \pi_2 \}$ and      $\G_{sym}=0$.
\item   $\G:2.2^3=span\{ \pi_1= \cos(\phi)(A_{x,t}+A_{z,y})+\sin(\phi)(A_{x,z}+A_{y,t}),~~\pi_2=A_{x,y}\}$ with  $\langle x,z\rangle=\langle y,t\rangle=1$  and     $\phi\in ]0,\frac{\pi}{2}[$, \\
      $ R(\G)=span\{  A_{x,y}\vee A_{x,y},~~A_{x,y}\vee \pi_1 \}$    and     $\G_{sym}=0$.
    \item $\G:2.3^1=span\{ \pi_1=A_{x,z}+A_{x,y}+A_{t,y},~~\pi_2=A_{x,t}\}$,  with  $\langle x,z\rangle=\langle y,t\rangle=1$,\\
       $ R(\G)=\R.\pi_2\vee \pi_2$  and     $\G_{sym}=0$.\\
\item $\G:2.4^1=span\{ A_{x,z}, A_{x,y}\}$,  with  $\langle x,z\rangle=-\langle y,y\rangle=\langle t,t\rangle=1$ ,\\
       $ R(\G)=span\{ A_{x,z}\vee A_{x,z},~~A_{y,x}\vee A_{y,x},~A_{x,z}\vee A_{y,x}\}$    and     $\G_{sym}=0$.
 \item $\G:2.5^1=span\{ A_{x,t}, A_{x,y}\}$,  with  $\langle x,z\rangle=\langle y,t\rangle=1$ ,\\
       $ R(\G)=\G_{sym}=span\{  A_{x,y}\vee A_{x,y},~~A_{x,t}\vee A_{x,t},~~A_{x,y}\vee A_{x,t} \}  .$
  \end{enumerate}
  \item{$\dim\G=3$}:
  \begin{enumerate}
    \item   $\G:3.1^1=span\{ A_{x,z}, A_{x,t}, A_{y,t}\}$,  with  $\langle x,z\rangle=\langle y,t\rangle=1$ ,\\
       $ R(\G)=span\{A_{x,z}\vee A_{x,z},~~A_{x,t}\vee  A_{x,t},~~A_{y,t}\vee  A_{y,t},~~A_{x,t}\vee  A_{y,t},~~A_{x,z}, A_{x,t}\}$,\\ and      $\G_{sym}=0$

  \item $\G:3.2^1=span\{ \pi_1=A_{x,z}+aA_{y,t},~~\pi_2=A_{x,t},~~\pi_3=A_{x,y}\}$, with  $\langle x,z\rangle=\langle y,t\rangle=1$  and     $a\geq0$,\\
        If  $a=0$,   we get   $R(\G)=span\{\pi_1\vee \pi_1,~~\pi_2\vee \pi_2,~~\pi_3\vee \pi_3,~~\pi_2\vee \pi_3,~~\pi_1\vee \pi_2
       ,~~\pi_1\vee \pi_3 \} $ and     $\G_{sym}=0$.\\
             Otherwise,  $ R(\G)=span\{   \pi_2\vee \pi_2,~~\pi_3\vee \pi_3,~~\pi_2\vee \pi_3,~\pi_1\vee \pi_2,~\pi_1\vee \pi_3\},$\\
   and         if more  $a\neq1$,   we get     $\G_{sym}=\R\{ \pi_2\vee \pi_2\}$,     otherwise,  we get;  $\G_{sym}=0$ ,\\

  \item $\G:3.3^1=span\{ \pi_1=A_{y,t},~~\pi_2=A_{x,t},~~\pi_3=A_{x,y}\}$  with  $\langle x,z\rangle=\langle y,t\rangle=1$,\\
       $ R(\G)=span\{ \pi_1\vee \pi_1,~~\pi_2\vee \pi_2,~~\pi_3\vee \pi_3,~~\pi_2\vee \pi_3,~~\pi_1\vee \pi_2,~~\pi_1\vee \pi_3\}$,  
       $\G_{sym}=\R.\pi_2\vee \pi_3$.
    \item $\G:3.4^1=span\{ \pi_1=A_{x,z}+A_{t,y},~~\pi_2=A_{x,t},~~\pi_3=A_{y,z}\}$  with  $\langle x,z\rangle=\langle y,t\rangle=1$,\\
        $ R(\G)=span\{ (\pi_1\vee \pi_1-2\pi_2\vee \pi_3),~~\pi_2\vee \pi_2,~~\pi_3\vee \pi_3,~~\pi_1\vee \pi_2,~~\pi_1\vee \pi_3 \}$,    $\G_{sym}=0$.

        \item $\G:3.5^1=span\{ \pi_1=A_{x,z},~~\pi_2=A_{x,y},~~\pi_3=A_{y,z}\}$, with  $\langle x,z\rangle=\langle y,y\rangle=-\langle t,t\rangle=1$,
   $ R(\G)=span\{ (\pi_1\vee \pi_1),~~(\pi_2\vee \pi_2),~~(\pi_3\vee \pi_3),~~(\pi_1\vee \pi_2),~~\pi_1\vee \pi_3,~~\pi_2\vee \pi_3 \}$, \\  and     $\G_{sym}=\R.(\pi_1\vee \pi_1+2\pi_2\vee \pi_3)$.
    \end{enumerate}
    \item{$\dim\G=4$}:
     \begin{enumerate}
    \item   $\G:4.1^1=span\{ A_{x,y}, A_{x,z}, A_{x,t}, A_{y,t}\}$  with  $\langle x,z\rangle=\langle y,t\rangle=1$ ,\\
       $ R(\G)=span\{A_{x,y}\vee A_{x,y},~~A_{x,z}\vee  A_{x,z},~~A_{x,y}\vee  A_{x,z},~~  A_{x,t}\vee  A_{x,z},~~ A_{x,t}\vee  A_{x,t},~~A_{y,t}\vee  A_{x,t},~~A_{x,y}\vee  A_{x,t},~~ A_{y,t}\vee  A_{y,t},~~A_{x,y}\vee  A_{y,t} \}$,\\
        $\G_{sym}=0$

         \item   $\G:4.2^1=span\{ A_{x,t}, A_{x,z}, A_{y,t}, A_{y,z}\}$  with  $\langle x,z\rangle=\langle y,t\rangle=1$ ,\\
       $ R(\G)=span\{ A_{x,z}\vee A_{x,z},~A_{x,t}\vee  A_{x,t},~A_{y,z}\vee  A_{y,z},~A_{y,t}\vee  A_{y,t},~A_{x,z}\vee  A_{x,t},~A_{x,z}\vee  A_{y,z},~(A_{x,z}\vee  A_{y,t}+A_{x,t}\vee  A_{y,z}),~A_{x,t}\vee  A_{y,t},~A_{y,z}\vee  A_{y,t} \}$,\\  and    $\G_{sym}=\R\{A_{x,z}\vee A_{x,z}+A_{y,t}\vee  A_{y,t}+A_{x,z}\vee  A_{y,t}+A_{x,t}\vee  A_{y,z}\}$

         \item   $\G:4.3^1=span\{\pi_1= A_{x,y}, \pi_2=A_{x,t}, \pi_3= A_{t,y}+ A_{x,z}, \pi_4=A_{y,z}\}$  with  $\langle x,z\rangle=\langle y,t\rangle=1$ ,\\
       $ R(\G)=span\{ \pi_1\vee \pi_1,~\pi_2\vee \pi_2,~(\pi_3\vee \pi_3-2\pi_2\vee \pi_4),~\pi_4\vee \pi_4
       ,~\pi_1\vee \pi_2,~~\pi_1\vee \pi_3 ,~\pi_1\vee \pi_4,~\pi_2\vee \pi_3,~\pi_3\vee \pi_4 \}$,\\ and     $\G_{sym}=\R\{\pi_1\vee \pi_1 \}$.
       \end{enumerate}
       \item-{$\dim\G=5$ or $6$}:
     \begin{enumerate}
            \item   $\G:5.1^1=span\{ A_{x,y},A_{x,z}, A_{x,t},  A_{y,z}, A_{y,t}\}$  with  $\langle x,z\rangle=\langle y,t\rangle=1$ ,\\$\dim( R(\G))=14$   and     $\G_{sym}=0$.
     \item   $\G:6.1^1=so(2,2)$, we get   $\dim( R(\G))=19$ \\$\G_{sym}=\R(A_{x,z}\vee A_{x,z}+A_{y,t}\vee  A_{y,t}+A_{x,z}\vee  A_{y,t}+2.A_{x,t}\vee  A_{y,z}+2.A_{x,y}\vee
          A_{t,z})$  with   $\langle x,z\rangle=\langle y,t\rangle=1$
       \end{enumerate}
   \end{enumerate}
\end{theo}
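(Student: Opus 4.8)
The plan is to work through Komrakov's classification \cite{komrakov} of the Lie subalgebras of $\mathrm{so}(2,2)$ one entry at a time, reducing the problem to a finite list of normalized representatives by equivariance. Fix a model $(V,\prs)$ of the neutral $4$-dimensional space. The action of $\mathrm{O}(V,\prs)$ on $V\wedge V$, and the action it induces on $P=(V\wedge V)\vee(V\wedge V)$, commute with the Bianchi operator $B$ of \eqref{eq1-4} and are compatible with the $\mathrm{so}(V,\prs)$-action used to define $\G_{sym}$; consequently $g\in\mathrm{O}(V,\prs)$ carries $R(\G)$ onto $R(g\G g^{-1})$ and $\G_{sym}$ onto $(g\G g^{-1})_{sym}$. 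Since Komrakov's list gives exactly one representative of each conjugacy class, each equipped with an adapted basis $\{x,y,z,t\}$ of $V$ (isotropic or orthonormal according to the case), it suffices to compute $R(\G)$ and $\G_{sym}$ for these finitely many subalgebras; for each of them the structure constants are read off at once from $[A_{u,v},A_{a,b}]=\langle v,a\rangle A_{u,b}+\langle u,b\rangle A_{v,a}-\langle v,b\rangle A_{u,a}-\langle u,a\rangle A_{v,b}$.

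For a fixed representative $\G$ with the listed generators $\pi_1,\dots,\pi_k$, the first task is $R(\G)=\ker\bigl(B_{/\G}\bigr)$. A generic element of $\G\vee\G$ is $\Ku=\sum_{1\le i\le j\le k}c_{ij}\,\pi_i\vee\pi_j$; expanding each $\pi_i$ in the basis $\{A_{e_p,e_q}\}$ of $\mathrm{so}(V,\prs)$ and applying \eqref{eq1-4} together with the bilinearity of $B$ writes $B(\Ku)$ in a fixed basis of $P$. The equation $B(\Ku)=0$ is then a homogeneous linear system in the coefficients $c_{ij}$ (with rational, or rational-function, entries when a parameter $a$ or $\phi$ is present), and solving it — as announced in the introduction, with a computer algebra system — produces a basis of $R(\G)$, which is then matched against the statement.

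For $\G_{sym}=\{T\in R(\G):\G.T=0\}$ one proceeds similarly with a second linear system. For each generator $A$ of $\G$ the derivation $A.T$, given by $(A.T)(u\wedge v)=[A,T(u\wedge v)]-T(A(u)\wedge v)-T(u\wedge A(v))$, is evaluated on the basis $\{A_{e_p,e_q}\}$ of $V\wedge V$ and expanded in $P$; requiring $A.T=0$ for every generator, applied to a generic $T\in R(\G)$ already parametrized by the previous step, cuts out $\G_{sym}$. The one-parameter families need extra care: in the cases $1.1^1$, $1.1^2$, $2.2^1$, $2.2^3$, $3.2^1$ and the $\phi$-families the rank of the Bianchi system and/or of the invariance system drops at the distinguished values $a=0$, $a=1$, $\phi=\pi/4$, and these specializations are precisely what produces the case splits recorded in the statement, so each of them must be treated on its own.

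The theorem is the aggregate of all these computations, and the only real difficulty is their number: Komrakov's list is long, and each entry requires setting up and solving two homogeneous linear systems in up to $\binom{7}{2}=21$ unknowns while keeping track of the normalized parameter ranges attached to the conjugacy class. To guard against bookkeeping errors I would cross-check in three ways. First, dimension counts: for instance $\dim R(\mathrm{so}(2,2))$ must equal the dimension of the space of algebraic curvature tensors of a $4$-dimensional space. Second, compatibility along inclusions: whenever $\G'\subseteq\G$ one has $R(\G')=R(\G)\cap(\G'\vee\G')$ and $\G_{sym}\cap(\G'\vee\G')\subseteq\G'_{sym}$, and the tables must respect this. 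Third, equivariance: any two entries of Komrakov's list that turn out to be $\mathrm{O}(2,2)$-conjugate must yield conjugate answers. Once these checks are in place, the tables in the statement follow by inspection.
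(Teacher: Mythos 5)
Your proposal follows essentially the same route as the paper: reduce to Komrakov's list of representatives of the subalgebras of $\mathfrak{so}(2,2)$ and, for each one, determine $R(\G)=\ker(B_{/\G})$ and $\G_{sym}$ by solving the two homogeneous linear systems (Bianchi and $\G$-invariance) with a computer algebra system, treating the parameter values where the rank drops as separate cases — which is exactly what the paper does, since it states the tables and notes that the computations of Section \ref{section4} were carried out with computation software. Your added consistency checks (equivariance, restriction along inclusions, dimension counts) are sensible safeguards and go slightly beyond what the paper records, but the method is the same.
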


 \begin{pr}{\label{pr43}}  Let $\Ku$ be a semi-symmetric curvature tensor  on  the  four dimensional neutral  vector space $(V,\prs)$.   Then,   there is the basis   $(x,y,z,t)$ of  $V$ such that the one of following  situations is checked:
\begin{enumerate}
\item  $\dim \h( \Ku)=1$ and $\Ku$  has one of the following forms:

 \begin{enumerate}
 \item $ \Ku=b. A_{x,z}\vee A_{x,z}$  and        $\;\Ri=-2b.z\vee x$, where $b\in \R^*$\\  with  $\langle x,z\rangle=\langle y,t\rangle=1$.
 \item  $ \Ku=b. A_{x,z}\vee A_{x,z}$   and    $\Ri=b.(x\vee x+z\vee z)$  where $b\in \R^*$,\\  with  $\langle x,x\rangle=-\langle y,y\rangle=\langle z,z\rangle=-\langle t,t\rangle=1$.
 \item   $ \Ku=a.A_{x,y}\vee A_{x,y}$,  $\Ri=0$ and    $a\in\R^*$,\\  with  $\langle x,t\rangle=\langle y,z\rangle=1$.
 \item  $ \Ku=a A_{x,y}\vee A_{x,y}$   and     $\Ri=-a. x \vee x$,\\ with $a\in \R^*$ and    $\langle x,z\rangle=-\langle y,y\rangle=\langle t,t\rangle=1$.
    \end{enumerate}
\item   $\dim \h( \Ku)=2$ and $\Ku$  has one of the following forms:

    \begin{enumerate}
 \item  $ \Ku= a.A_{x,z}\vee A_{x,z}+bA_{y,t}\vee  A_{y,t}$, $\Ri=-2(a.x\vee z+b.y {\vee }t)$,\\ with  $(a,b)\in\R^*\times \R^*$ and     $\langle x,z\rangle=\langle y,t\rangle=1$.
 \item  $ \Ku= a.A_{x,z}{\vee }A_{x,z}+bA_{y,t}{\vee } A_{y,t}$, $~~\Ri=a(x\vee x+z\vee z)-b(y \vee y+t\vee t)$,\\ with  $(a,b)\in\R^*\times\R^* $   and    $\langle x,x\rangle=-\langle y,y\rangle=\langle z,z\rangle=-\langle t,t\rangle=1$,\\

 \item    $ \Ku= a(\pi_1\vee \pi_1-\pi_2\vee \pi_2)+b.\pi_1\vee \pi_2,  ~~\Ri=-4a(x\vee t+y\vee z)+2b(y\vee t-x\vee z) $,\\ with $(a,b)\in\R^*\times\R^*$,
             $\pi_1=A_{x,z}+A_{t,y},~~\pi_2=A_{x,t}+A_{y,z}$  and    $\langle x,t\rangle=\langle y,z\rangle=1$,\\

\item $ \Ku= c\pi_2\vee \pi_2+d\pi_1\vee \pi_2, ~~\Ri=-2d.x\vee t $,\\  with $(c,d)\in\R^*\times\R^*$  and     $\pi_1=A_{x,z}+A_{y,t},~~\pi_2=A_{x,t}$  and    $\langle x,z\rangle=\langle y,t\rangle=1$.

 \item $ \Ku=a.A_{x,y}\vee A_{x,y}+b.A_{x,t}\vee A_{x,t}+c.A_{x,y}\vee A_{x,t}$, $\Ri=c.x\vee x$,\\ with  $(a,b,c)\in \R^*\times\R^*\times\R^*$ and      $\langle x,z\rangle=\langle y,t\rangle=1$.
  \end{enumerate}
     \item    $\dim\h( \Ku)=3$ and  there is $a\in\R^*$   such that   :\\ $ \Ku=a(A_{x,z}\vee A_{x,z}+2A_{x,y}\vee A_{y,z})$  and    $\Ri=-2a(2x\vee z+y\vee y)$,\\
         with   $\langle x,z\rangle=\langle y,y\rangle=-\langle t,t\rangle=1$ ,\\

    \item   $\dim\h( \Ku)=4$ and   there is $a\in\R^*$   such that:\\ $ \Ku=a(A_{x,z}\vee A_{x,z}+A_{y,t}{\vee } A_{y,t}+A_{x,z}{\vee }A_{y,t}+A_{x,t}{\vee }A_{y,z})$,  $\Ri=-3a(.x\vee z+y\vee t)$,\\ with  $\langle x,z\rangle=\langle y,t\rangle=1$.
 \item  {$\dim\h( \Ku)=6$} then, 
        $\h( \Ku)=so(2,2)$  and  there is $a\in\R^*$   such that:    $$ \Ku=a(A_{x,z}\vee A_{x,z}+A_{y,t}{\vee }A_{y,t}+A_{x,z}{\vee } A_{y,t}+2.A_{x,t}{\vee } A_{y,z}+2.A_{x,y}{\vee }A_{t,z}),\;\; \text{ and }\;\;\Ri=-6a(x\vee z+y\vee t),$$ with   $\langle x,z\rangle=\langle y,t\rangle=1$
       \end{enumerate}
 \end{pr}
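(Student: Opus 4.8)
The plan is to obtain the statement as a post-processing of the table established in the theorem above, which records, for every conjugacy class of subalgebra $\G\subseteq\mathfrak{so}(2,2)$ in Komrakov's classification \cite{komrakov}, the space $R(\G)$ of curvature tensors and the space $\G_{sym}$ of symmetric curvature tensors of type $\G$.

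First I would fix an adapted basis of $(V,\prs)$, identifying $\mathfrak{so}(V,\prs)$ with $\mathfrak{so}(2,2)$, and set up the following dictionary. If $\Ku\neq0$ is a semi-symmetric curvature tensor on $V$, then by the discussion in Section \ref{section2} its primitive holonomy $\G:=\mathfrak{h}(\Ku)$ is a Lie subalgebra of $\mathfrak{so}(2,2)$, the tensor $\Ku$ is $\G$-invariant, and the image of $\Ku$ spans $\G$; hence $\Ku\in\G_{sym}$ and the image of $\Ku$ is exactly $\G$. Conversely, for any Lie subalgebra $\G$ and any $T\in\G_{sym}$ whose image spans all of $\G$, the tensor $T$ is a semi-symmetric curvature tensor with $\mathfrak{h}(T)=\G$. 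Thus, up to the choice of adapted basis, classifying the non-flat semi-symmetric curvature tensors reduces to running through the finitely many $\G$ listed in the theorem above and, for each one with $\G_{sym}\neq0$, selecting the elements of $\G_{sym}$ with full image and reducing them to normal form under the subgroup of $\mathrm{O}(2,2)$ stabilizing $\G$, i.e., under the admissible changes of the adapted basis $(x,y,z,t)$.

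Then I would go through the list by increasing $\dim\G$. For every $\G$ for which the theorem above gives $\G_{sym}=0$ there is nothing to do, since a non-flat semi-symmetric $\Ku$ with $\mathfrak{h}(\Ku)=\G$ would lie in $\G_{sym}=0$. For each remaining $\G$ I would write the general element $T=\sum_kc_kT_k$ of $\G_{sym}$ in the basis displayed in the theorem, compute its image from $(\pi\vee\sigma)(\omega)=\tfrac12\big(\langle\sigma,\omega\rangle\pi+\langle\pi,\omega\rangle\sigma\big)$ and the nondegeneracy of $\prs$ on $V\wedge V$, and impose that this image be all of $\G$: this is an open nondegeneracy condition on the $c_k$ (some coefficient nonzero, or a small Gram-type matrix invertible), and exactly the $(c_k)$ violating it produce tensors whose holonomy is a strictly smaller subalgebra, hence already treated at an earlier stage; so nothing is lost and nothing is counted twice. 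It turns out that exactly twelve subalgebras admit elements with full image — four of dimension $1$, five of dimension $2$, one of dimension $3$, one of dimension $4$, and $\mathfrak{so}(2,2)$ itself — which produces precisely the twelve normal forms in the statement; in particular no semi-symmetric $\Ku$ has $\dim\mathfrak{h}(\Ku)=5$, because $(5.1^1)_{sym}=0$. For each such $\G$ I would then normalize the remaining parameters using the stabilizer of $\G$ in $\mathrm{O}(2,2)$, read off from Komrakov's description of $\G$ and of its normalizer, obtaining the forms listed; and finally compute $\Ri_{\Ku}$ for each by extending $\ric_{(u\wedge v)\vee(w\wedge t)}=\langle u,w\rangle t\vee v+\langle v,t\rangle u\vee w-\langle v,w\rangle t\vee u-\langle u,t\rangle v\vee w$ by bilinearity.

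The main obstacle is the bookkeeping in these last steps: one has to stratify the semi-symmetric tensors by their holonomy so that each isometry class is obtained exactly once — the delicate point being the degenerations in which a coefficient of $T$ vanishes and $\mathfrak{h}(\Ku)$ drops to a smaller, possibly non-conjugate, subalgebra — and the parameter reductions rest on Komrakov's explicit data for each $\G$. By contrast, the algebraic Bianchi identity needs no verification, since we only ever consider $\Ku\in\G_{sym}\subseteq R(\G)$, and checking that the Lie algebra generated by $\mathfrak{h}(\Ku)$ equals $\G$ is immediate, $\G$ being already a subalgebra whose underlying vector space is spanned by the image of $\Ku$.
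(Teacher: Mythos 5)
Your proposal is correct and follows essentially the same route as the paper: Section \ref{section4} explicitly sets up the reduction to Komrakov's subalgebras $\G\subset\mathfrak{so}(2,2)$, computes $R(\G)$ and $\G_{sym}$ for each (by computer), and Proposition \ref{pr43} is then obtained exactly as you describe, by keeping those $T\in\G_{sym}$ whose image is all of $\G$ (so $\mathfrak{h}(T)=\G$) and writing them in a basis adapted to $\G$, which yields the same twelve normal forms with $\dim\mathfrak{h}(\Ku)\in\{1,2,3,4,6\}$.
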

In the following corollary, we give all curvature tensors on the  four dimensional neutral  vector space in the some particuler cases: Einstein, isotropic Ricci or complex Ricci.  

\begin{co}\label{co4-21}   Let $\Ku$ be a semi-symmetric curvature tensor  on  the  four dimensional neutral  vector space $(V,\prs)$.   Then,   there is the basis   $(x,y,z,t)$ of  $V$ such that:
\begin{enumerate}
\item   If  $ \Ku$ is the  Einstein curvature tensor with non zero scalar  curvature.   Then,  one of the  following  situations is checked::
 \begin{enumerate}
 \item  $\dim\h( \Ku)=6$, then;\\  $ \Ku=a(A_{x,z}\vee A_{x,z}+A_{y,t}{\vee }A_{y,t}+A_{x,z}{\vee } A_{y,t}+2.A_{x,t}{\vee } A_{y,z}+2.A_{x,y}{\vee }A_{t,z})$,\\ $\Ri=-6a(.x\vee z+y\vee t)$,\\ with $a\in\R^*$ and     $\langle x,z\rangle=\langle y,t\rangle=1$
 \item  $\dim\h( \Ku)=4$,  then,   \\  $ \Ku=a(A_{x,z}\vee A_{x,z}+A_{y,t}{\vee } A_{y,t}+A_{x,z}{\vee }A_{y,t}+A_{x,t}{\vee }A_{y,z})$ ,\\ with   $a\in\R^* $ and    $\langle x,z\rangle=\langle y,t\rangle=1$.
 \item  $\dim\h( \Ku)=2$,  then;   \\ $ \Ku= a.(A_{x,z}{\vee }A_{x,z}+A_{y,t}{\vee } A_{y,t})$ or $\Ku= a.(A_{x,y}{\vee }A_{x,y}+A_{z,t}{\vee } A_{z,t})$,\\ with $a\in\R^* $  and     $\langle x,x\rangle=-\langle y,y\rangle=\langle z,z\rangle=-\langle t,t\rangle=1$,
     \end{enumerate}
 \item     If  $ \Ku$ is a  Ricci  flat,  then,  there is  $(a,b) \in\R^* \times \R^* $  such that   ;\\  $ \Ku=a.A_{x,y}\vee A_{x,y}+b.A_{x,t}\vee A_{x,t}$,\\  with  $\langle x,z\rangle=\langle y,t\rangle=1$.

 \item  If  $ \Ku$ is an isotropic Ricci,  Then     the one of following  situations is checked:   
 \begin{enumerate}
 \item  $ \Ku=a A_{x,y}\vee A_{x,y}$   and     $\Ri=-a. x \vee x$,\\with $a\in\R^*$ and       $\langle x,z\rangle=-\langle y,y\rangle=\langle t,t\rangle=1$.
 \item $ \Ku= c\pi_2\vee \pi_2+d(\pi_1\vee \pi_2), ~~\Ri=-2d.x\vee t $,\\with  $(c,d)\in\R\times \R^*$ ,  $\pi_1=A_{x,z}+A_{y,t},~~\pi_2=A_{x,t}$  and    $\langle x,z\rangle=\langle y,t\rangle=1$.
 \item $ \Ku=a.A_{x,y}\vee A_{x,y}+b.A_{x,t}\vee A_{x,t}+c.A_{x,y}\vee A_{x,t}$, $\Ri=c.x\vee x$,\\ with $(a,b,c)\in\R^2\times \R^*$ and     $\langle x,z\rangle=\langle y,t\rangle=1$
  \end{enumerate}
\item   If  Ricci has a non-real  eigenvalue,   then, there is  $(a,b)\in\R\times \R^*$   such that   
    \[ \Ku= a(\pi_1\vee \pi_1-\pi_2\vee \pi_2)+b.\pi_1\vee \pi_2 ~~\text{and}~~
         \Ri=-4a(x\vee t+y\vee z)+2b(y\vee t-x\vee z) \] with   $\pi_1=A_{x,z}+A_{t,y},~~\pi_2=A_{x,t}+A_{y,z}$  and    $\langle x,t\rangle=\langle y,z\rangle=1$.
  \end{enumerate}

 \end{co}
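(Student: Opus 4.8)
The plan is to obtain Corollary \ref{co4-21} directly from Proposition \ref{pr43}: that proposition already exhibits, up to linear isometry, every semi-symmetric curvature tensor $\Ku$ on the four-dimensional neutral space together with its Ricci operator $\Ri_\Ku$, so all that remains is to sort those entries by the algebraic type of $\Ri_\Ku$. Recall that, by Proposition \ref{pr2}, in dimension four a semi-symmetric $\Ku$ is either reducible (hence a product of lower-dimensional Einstein or Ricci-isotropic pieces) or one of three irreducible types: $\Ri_\Ku=\la\,\mathrm{Id}_V$ with $\la\neq0$ (Einstein, non-null scalar), $\Ri_\Ku\neq0$ with $\Ri_\Ku^2=0$ (Ricci isotropic), or $\Ri_\Ku$ with a non-real eigenvalue (complex Ricci); the Ricci-flat case $\Ri_\Ku=0$ is the degenerate Einstein one. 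So I would run through the list of Proposition \ref{pr43} and read off $\la$, the nilpotency of $\Ri_\Ku$, or its (possibly complex) spectrum from the displayed formula, collecting the entries of each type into the four sub-lists; exhaustiveness is inherited from that of Proposition \ref{pr43}.

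The only arithmetic involved is evaluating the symmetric endomorphisms $u\vee v$ that appear in those Ricci formulas, for which three observations make everything immediate: if $u,v$ are null with $\langle u,v\rangle=1$ then $u\vee v=\tfrac12 P_{\mathrm{span}(u,v)}$, so $u\vee v+u'\vee v'=\tfrac12\mathrm{Id}_V$ whenever $\mathrm{span}(u',v')$ is the complementary hyperbolic plane; if $u,v$ span a totally isotropic plane then $(u\vee v)^2=0$; and if $u,v$ are orthonormal then $u\vee u+v\vee v=P_{\mathrm{span}(u,v)}$. With the first, the $\dim\h(\Ku)=6$ and $\dim\h(\Ku)=4$ entries of Proposition \ref{pr43} give $\Ri_\Ku=-3a\,\mathrm{Id}_V$ and $\Ri_\Ku=-\tfrac{3}{2}a\,\mathrm{Id}_V$, and the $\dim\h(\Ku)=2$ entries 2(a) and 2(b) give $\Ri_\Ku$ a multiple of the identity precisely when their two coefficients coincide --- these yield the Einstein cases 1(a)--1(c) of the Corollary. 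With the second observation, the entries 1(d), 2(d), 2(e) have $\Ri_\Ku$ equal to $-a\,x\vee x$, $-2d\,x\vee t$, $c\,x\vee x$ with $x$ null (respectively with $x,t$ spanning an isotropic plane), hence $\Ri_\Ku^2=0$ --- the Ricci-isotropic cases 3(a)--3(c); and $\Ri_\Ku=0$ for entry 1(c), which together with the $c=0$ specialization of 2(e) yields the Ricci-flat case 2. Finally, entry 2(c) gives $\Ri_\Ku=-2a\,\mathrm{Id}_V+N$ with $N$ acting as $\left(\begin{smallmatrix}0&-b\\ b&0\end{smallmatrix}\right)$ on each of the coordinate planes $\mathrm{span}(x,y)$ and $\mathrm{span}(z,t)$, so $\mathrm{Spec}(\Ri_\Ku)=\{-2a\pm ib\}$, which is non-real exactly when $b\neq0$ --- the complex-Ricci case 4. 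All remaining entries (among them 1(a), 1(b), the $\dim\h(\Ku)=3$ entry, and 2(a), 2(b) with distinct coefficients) have $\Ri_\Ku$ real-diagonalizable and not a non-zero scalar, so they are reducible by Proposition \ref{pr2} and do not enter the Corollary.

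One point deserves care: Proposition \ref{pr43} writes its representatives with all structure constants non-zero, so that $\h(\Ku)$ is exactly the indicated subalgebra, whereas the Corollary also records some of their degenerations --- for instance $a=0$ in the complex-Ricci family, or $c=0$ in the Ricci-isotropic families (which produces the Ricci-flat case). Each such degeneration still lies in $\G_{sym}=R(\G)$ for the relevant subalgebra $\G$ of the Theorem above, hence is still a semi-symmetric curvature tensor, and its Ricci operator is obtained by setting the vanishing constant to zero in the displayed formula; one then reconfirms the claimed Ricci type. The proof is thus finite bookkeeping over the list of Proposition \ref{pr43} enlarged by these degenerate sub-cases. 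The main hazard is purely computational --- a sign slip while evaluating $u\vee v$ in one of the null bases, or overlooking a degenerate specialization --- and there is no conceptual obstacle.
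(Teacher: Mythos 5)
Your proposal is correct and follows essentially the same route as the paper: the paper gives no separate argument for Corollary \ref{co4-21}, which is simply read off from the tables of $R(\G)$ and $\G_{sym}$ in the preceding theorem and from Proposition \ref{pr43} by sorting the representatives (and their coefficient degenerations) according to the algebraic type of $\Ri_\Ku$, exactly the bookkeeping you carry out. Your explicit evaluation of the $u\vee v$ operators and your remark that the degenerate sub-cases (e.g.\ $a=0$ or $c=0$) remain in the relevant $\G_{sym}$ is, if anything, more careful than what the paper records.
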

 \section{Semi-symmetric manifolds}

Let $(M,g)$ be  pseudo-Riemannian manifolds of dimension $n$ with Levi-Civita connexion $\na$  and Riemannian curvature $R$,   $ \mathfrak{ric}$ and $\Ri$ are the Ricci tensor and the Ricci     operator respectively and let ${\cal X}(M)$ be the space of all vetor fields on $M$.
 \begin{Def}
  $(M,g)$  has said   {\it  semi-symmetric} if, $R.R=0$, i.e,  $R$ verifies \begin{equation}\label{eq1-9va}
 [R(X,Y),R(Z,T)]=R(R(X,Y)Z,T)+R(Z,R(X,Y)T),\quad X,Y,Z,T\in {\cal X}(M).\end{equation}
   \end{Def}

Let $(M,g)$ be  pseudo-Riemannian  semi-symmetric manifolds. Then, for each point
 $m\in M$, the restriction $R_m$ of $R$ on tangent space $T_mM$ is  semi-symmetric curvature tensor. So, the minimal  polynomial of $\Ri_m$ is of the form $\X=\prod_i P_i$ such that  the polynomials $(P_i)_i$ are   mutually prim between them and for all $i$, $P_i$ is irreducible  or  $P_i=X^2$.   We can  suppose that  $P_i$ is irreducible for all  $i\geq1$ and  define  the distributions:  \[V_0(m):=\ker(\Ri_m^2)~~\text{ et}  ~~V_i(m):=\ker(P_i(\Ri_m))~~\text{ for all}~~i\geq 1,\] and we get the following proposition:

     \begin{pr}\label{pr121} The distributions $(V_i)_i$ have the following  proprietes:

     For all $i\neq j$, we get:
     \begin{equation}\label{l}
             \na_{V_j}V_i\subset V_i, \;\;  \na_{V_i}V_i\subset V_0+V_i, \;\;
             \na_{V_0}V_i\subset V_i, \;\;
              \na_{V_0}V_0\subset V_0, \;\;  \na_{V_i}V_0\subset V_0+V_i.
         \end{equation}

     \end{pr}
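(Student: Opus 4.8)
The plan is to reduce \eqref{l} to a statement on the covariant derivative of the Ricci operator and then to extract that statement from the contracted second Bianchi identity; the pointwise input is Proposition \ref{pr1-3-1}. I work on the dense open set where the number of the $P_i$'s and the dimensions of the $V_i$'s are locally constant, so that the orthogonal projections $\pi_i\colon TM\to V_i$ and the complementary projections $\pi_i^\perp=\mathrm{Id}-\pi_i$ are smooth. By Proposition \ref{pr1-3-1}, at each point $TM=V_0\oplus V_1\oplus\dots\oplus V_r$ orthogonally, each $V_i$ is $\h(R)$-invariant, $R_{|V_i\we V_j}=0$ for $i\ne j$, and $\Ri$ is block diagonal with $\Ri_{|V_0}$ nilpotent of index $\le 2$ and $\Ri_{|V_i}$ ($i\ge 1$) invertible with minimal polynomial $P_i$. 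Moreover, writing $R^k=R_{|V_k\we V_k}$, one has $\h(R)=\bigoplus_k\h(R^k)$ with $\h(R^k)\subset\mathfrak{so}(V_k)$, so every curvature operator $R(x,y)$ is block diagonal, and $R(z,z')$ lies in one single $\h(R^j)$ (hence is supported on $V_j$) whenever $z,z'$ are sections of $V_j$.

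\emph{Step 1: translating \eqref{l} into vanishing of off-diagonal blocks of $\na\Ri$.} For a section $X$ of $V_i$ the assignment $S_i^Z(X):=\pi_i^\perp\na_Z X$ is tensorial in $X$ (and in $Z$), and since $\Ri X$ is again a section of $V_i$, projecting $\na_Z(\Ri X)=(\na_Z\Ri)X+\Ri\na_Z X$ onto $V_i^\perp$ yields the Sylvester equation
\[ S_i^Z\circ\Ri_{|V_i}-\Ri_{|V_i^\perp}\circ S_i^Z=\pi_i^\perp(\na_Z\Ri)_{|V_i}. \]
Because the $P_k$'s are pairwise coprime and $0$ is an eigenvalue of $\Ri_{|V_k}$ only for $k=0$, the operators $\Ri_{|V_i}$ and $\Ri_{|V_i^\perp}$ have disjoint spectra, so this Sylvester operator is invertible on $\mathrm{Hom}(V_i,V_i^\perp)$, and likewise on its invariant subspace $\mathrm{Hom}(V_i,V_0)$. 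Hence $S_i^Z$ is uniquely determined by $\pi_i^\perp(\na_Z\Ri)_{|V_i}$, it vanishes as soon as the latter does, and it takes values in $V_0$ as soon as the latter does. Running the same for $i=0$ (where $\Ri_{|V_0^\perp}$ is invertible), one checks that all five inclusions of \eqref{l} are equivalent to the following block structure: for $Z$ a section of $V_0$, $\na_Z\Ri$ is block diagonal, while for $Z$ a section of $V_m$ with $m\ge 1$ the only possibly non-zero off-diagonal blocks of $\na_Z\Ri$ are $\pi_0(\na_Z\Ri)\pi_m$ and its transpose.

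\emph{Step 2: proving this block structure from the contracted second Bianchi identity}
\[ \sum_a\varepsilon_a(\na_{e_a}R)(X,Y,e_a,W)=\pm\big((\na_X\ric)(Y,W)-(\na_Y\ric)(X,W)\big). \]
The essential point is that the block structure of $R$ makes the left-hand side collapse: if $X$ and $Y$ are sections of $V_m$ and $V_k$ with $m\ne k$ then $R(X,Y)=0$, so every term of $(\na_{e_a}R)(X,Y,e_a,W)$ reduces to one of the form $R(A,Y,e_a,W)$ or $R(X,B,e_a,W)$, where $R(\cdot,Y)$ is supported on $V_k$ and $R(X,\cdot)$ on $V_m$; such a term drops out whenever $W$ is a section of some $V_l$ with $l\ne k$, resp.\ $l\ne m$. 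Combining this with the block-diagonality of $\ric$ itself (which kills the term $X(\ric(Y,W))$ when $Y,W$ lie in different blocks) and the symmetry of $\na_Z\ric$ in its last two arguments, and placing the three free arguments in the various $V_i$'s, one obtains a closed family of linear relations among the scalars $\langle(\na_Z\Ri)u,v\rangle$ for $u,v,Z$ in prescribed blocks. Solving this family — using $\na g=0$ to treat the relations in which two of the three arguments fall in the same block, and $\Ri^2_{|V_0}=0$ for those involving $V_0$ — gives exactly the block structure announced in Step 1, and the Sylvester argument then delivers \eqref{l}.

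I expect the main obstacle to be the bookkeeping in Step 2: although the mechanism is transparent — semi-symmetry forces $R$ to be block diagonal with a Ricci operator whose blocks have pairwise disjoint spectra, and the contracted Bianchi identity propagates this rigidity to $\na\Ri$ — one must run the identity for all relevant placements of its three free arguments and carefully disentangle the resulting relations, in particular in the cases where the curvature contraction does not vanish outright and has to be combined with metric compatibility and with the index-$2$ nilpotency of $\Ri_{|V_0}$. The Sylvester reduction of Step 1 is what makes this a finite, purely algebraic verification rather than an open-ended one.
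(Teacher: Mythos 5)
Your Step 1 is sound: the Sylvester-type reduction of \eqref{l} to the vanishing of the relevant off-diagonal blocks of $\na\Ri$ is correct, and it plays exactly the role that the invertibility of $\Ri$ on each $V_i$, $i\geq1$ (plus metric compatibility) plays in the paper. The gap is in Step 2. The contracted second Bianchi identity is too weak an input, and the assertion that the relations it produces ``give exactly the block structure announced in Step 1'' is not substantiated. Write $S(Z,X):=\pi_i^{\perp}\na_ZX$ for $X$ a section of $V_i$. In the configurations you actually need --- for instance when there are only two blocks $V_1,V_2$ (a case that genuinely occurs, e.g.\ type $(S4\mu\la)$), or whenever $W$ lies in the same block as $X$ or $Y$ --- the curvature term on the left-hand side does not collapse to zero: already in the simplest case $\Ri=\la_i\,\mathrm{Id}$ on $V_i$, taking $X_1,W_1\in V_1$, $Y_2\in V_2$ and using $R(X_1,Y_2)=0$ together with the support property of $R$, the identity reduces (up to sign conventions) to a relation of the shape $(\la_1-\la_2)\langle S(X_1,W_1),Y_2\rangle=\sum_{e_a\in V_1}\varepsilon_a\langle S(e_a,R(e_a,W_1)X_1),Y_2\rangle$. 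All the non-clean instances are of this type: homogeneous linear relations saying ``spectral gap times $S$ equals a curvature contraction of $S$'', and nothing in your hypotheses excludes that the curvature-dependent operator on the right has the gap as an eigenvalue, so you cannot conclude $S=0$. The ``clean'' instances (three pairwise distinct blocks) only express a Codazzi-type symmetry of $\na\ric$ in its three slots and never force vanishing either. So the obstacle is not bookkeeping: the divergence identity ties the derivative direction to a contracted curvature slot and thereby loses precisely the components of $\na R$ that control $\pi_i^{\perp}(\na_Z\Ri)_{|V_i}$.

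The paper's proof avoids this by using the uncontracted second Bianchi identity with the derivative slot in $V_i^{\perp}$ and the three remaining arguments in $V_i$: there the vanishing $R(V_i,V_i^{\perp})=0$ kills the dangerous terms before any contraction, and, combined with the first Bianchi identity and the $\h(\Ku)$-invariance of $V_i$, it yields $\na_X\bigl(R(Y,Z)T\bigr)\in V_i$ directly; only then does one contract over an adapted frame to get $\na_X(\Ri Y)\in V_i$, and finish exactly as in your Step 1 (invertibility of $\Ri$ on $V_i$, then metric compatibility for the remaining inclusions). To repair your argument, keep Step 1 but replace the contracted identity in Step 2 by this use of the full differential Bianchi identity.
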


    \begin{proof}$\;$

Let $m\in M$. According the propositon.\ref{pr1-3-1},  we get: \begin{equation}\label{dec-Tm-ri} V_m:= T_mM=V_0(m)\oplus V_1(m)\oplus...\oplus V_r(m). \end{equation}

In first,  we show that for  $i\geq1$  and $X\in V_i^{\perp}$,   $\na_XV_i\subset V_i$:\\
 We choose $i\geq1$ and   $X\in V_i^{\perp}$. Then,  we get  $\; \na_X(R(V_i,V_i)V_i)\subset V_i$.

  Indeed:

  Let   $Y,\; Z,\; T\in V_i$. According the second identity of Bianchi, we get:

				\begin{eqnarray*} \na_X R(Y,Z,T)&:=&(\na_X R)(Y,Z)T\\
&=&-\na_Y R(Z,X,T)-\na_Z R(X,Y,T)\\
   &=&-\na_Y( R(Z,X)T)+ R(\na_YZ,X)T+ R(Z,\na_YX)T+ R(Z,X)\na_YT\\&&-\na_Z( R(Y,X)T)+ R(\na_ZY,X)T+ R(Y,\na_ZX)T+ R(Y,X)\na_ZT\\
   &=& R(\na_YZ,X)T+ R(Z,\na_YX)T+ R(\na_ZY,X)T+ R(Y,\na_ZX)T.
	\end{eqnarray*}
	
	According the  proposition.\ref{pr1-3-1}, we take: $R(V,V)(V_i)\subset V_i$  and   $\na_X R(Y, Z,T)\in
V_i$.

The otherwise,	
	
         \begin{eqnarray*} \na_X R(Y,Z,T)&=&\na_X( R(Y,Z)T)- R(\na_XY,Z)T- R(Y,\na_XZ)T- R(Y,Z)\na_XT\\
				&=&\na_X( R(Y,Z)T)- R(\na_XY,Z)T- R(Y,\na_XZ)T
\\&&+ R(Z,\na_XT)Y+ R(\na_XT,Y)Z. \end{eqnarray*}
				Then,  $\na_X(R(Y, Z)T)\in V_i$.

 Now, we will show that
 $\na_X\Ri(Y)\in V_i$.\\
 We choose  a pseudo-orthonormally basis  $(e_1,..., e_n)$  adapted  to the  decomposition (\ref{dec-Tm-ri}) and we put $\epsilon_k =\langle e_k,e_k\rangle$.
Let  $Z \in V_i^{\perp}$.

If  $e_k \in V_i$, we have seen that  $\na_X(R(Y, e_k)e_k)\in V_i$  and if  $e_k \in V_i^{\perp}$,
 we get   $ R(Y, e_k) = 0$. Then,
           \begin{eqnarray*}
           \langle \na_X(\Ri(Y)),Z\rangle&=& -\langle \Ri(Y), \na_XZ\rangle\\
            &=&\sum_{k=1}^n\epsilon_k \langle R(Y,e_k)e_k,\na_Xz\rangle\\&=&-\sum_{k=1}^n \epsilon_k \langle \na_X( R(Y,e_k)e_k),Z\rangle
   \\&=&0. \end{eqnarray*}
   So, $\na_X\Ri(Y)  \in V_i$.\\
 If $P_i(t)=t^2+at+b$ with $b\neq0$.  Then for all  $Y\in V_i$, we get  \[Y=-\frac{1}{b}(\Ri^2(Y)+a\Ri(Y))~~\text{ and},~~  \na_XY  \in V_i.\]
 If $P_i(t)=t-\la_i$ with $\la_i\neq0$.  Then, for all $Y\in V_i$, we get \[Y=\frac{1}{\la_i}\Ri(Y)~~\text{ and },~~ \na_XY  \in V_i.\]

So,  $\na_XV_i \subset V_i$, this shows that  $\na_{V_j}V_i \subset V_i$ and $\na_{V_0}V_i \subset V_i$,
 for all  $i,\; j  \geq1$ with  $i\neq  j$.

 The other results  are obtained immediatly   because the  metric $g$  is parallel ($\nabla g=0$).

    \end{proof}

    \begin{co}
     Let $(M,g)$ be  pseudo-Riemannian  semi-symmetric manifolds.  Let $\X=\prod_i P_i$ be the minimal  polynomial of $\Ri$. If we put $V_0=\ker(\Ri^2)$ and $\forall i\geq 1$, $V_i=\ker(P_i(\Ri))$. Then, for all $i\geq1$, the  distribution $V_0$  and $V_0+V_i$ are involutives.
    \end{co}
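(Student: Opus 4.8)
The plan is to read off the statement directly from Proposition \ref{pr121}, since the corollary is just the observation that two of the five inclusions established there are exactly the Frobenius integrability conditions for the distributions $V_0$ and $V_0+V_i$. Recall that a smooth distribution $D$ on $M$ is involutive precisely when $[X,Y]\in D$ for all sections $X,Y$ of $D$, and since the Levi-Civita connection is torsion-free, $[X,Y]=\na_XY-\na_YX$; hence it suffices to check $\na_DD\subset D$.

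First I would treat $V_0$. By Proposition \ref{pr121} we have $\na_{V_0}V_0\subset V_0$. Therefore for any two (local) vector fields $X,Y$ taking values in $V_0$ we get $\na_XY\in V_0$ and $\na_YX\in V_0$, whence $[X,Y]=\na_XY-\na_YX\in V_0$. By the Frobenius theorem $V_0$ is integrable, i.e. involutive.

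Next I would treat $V_0+V_i$ for a fixed $i\geq1$. The relevant inclusions from Proposition \ref{pr121} are $\na_{V_0}V_0\subset V_0$, $\na_{V_0}V_i\subset V_i$, $\na_{V_i}V_0\subset V_0+V_i$ and $\na_{V_i}V_i\subset V_0+V_i$. Writing an arbitrary section of $V_0+V_i$ as $X=X_0+X_i$ with $X_0$ a section of $V_0$ and $X_i$ a section of $V_i$, bilinearity of $\na$ gives, for $X=X_0+X_i$ and $Y=Y_0+Y_i$,
\[
\na_XY=\na_{X_0}Y_0+\na_{X_0}Y_i+\na_{X_i}Y_0+\na_{X_i}Y_i\in V_0+V_i+V_0+(V_0+V_i)=V_0+V_i,
\]
and symmetrically $\na_YX\in V_0+V_i$; hence $[X,Y]=\na_XY-\na_YX\in V_0+V_i$. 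Again Frobenius yields involutivity.

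There is essentially no obstacle here: the only mild point to note is that the inclusions of Proposition \ref{pr121} are stated pointwise along the decomposition \eqref{dec-Tm-ri}, so one should remark that the eigenspace decomposition $T_mM=V_0(m)\oplus V_1(m)\oplus\cdots\oplus V_r(m)$ varies smoothly with $m$ (the characteristic subspaces of the smooth, parallel-Ricci-type operator $\Ri$ have locally constant dimension by part (2c) and the minimal-polynomial structure of part (1) of Proposition \ref{pr1-3-1}), so that $V_0$ and each $V_i$, and hence $V_0+V_i$, are genuine smooth distributions to which the Frobenius theorem applies. With that understood, the corollary is immediate from the computation above.
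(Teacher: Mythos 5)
Your proposal is correct and is essentially the argument the paper intends: the corollary is stated as an immediate consequence of Proposition \ref{pr121}, and deducing bracket-closure of $V_0$ and $V_0+V_i$ from the inclusions $\na_{V_0}V_0\subset V_0$, $\na_{V_0}V_i\subset V_i$, $\na_{V_i}V_0\subset V_0+V_i$, $\na_{V_i}V_i\subset V_0+V_i$ via torsion-freeness of the Levi-Civita connection is exactly the intended (and only needed) step. Your extra remark on smoothness of the distributions, and the small conflation of ``involutive'' with ``integrable'' via Frobenius, do not affect the correctness.
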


    \begin{rem}
    The distribution $V_0$ and $V_0+V_i$ are involutives spaces  and  not  necessairly parallels.
   \end{rem} 
\section{Four dimensional semi-symmetric neutral Lie groups } \label{section4bis}


In this section, we give some general properties of semi-symmetric neutral Lie groups and we prove Theorem \ref{main} when $M$ is a neutral Lie group.

A Lie group $G$ together with a left-invariant pseudo-Riemannian metric $g$ is called a 
\emph{pseudo-Riemannian Lie group}. The  metric $g$ 
defines a  pseudo-Euclidean product $\prs$ on the Lie algebra $\G=T_eG$ of $G$, and conversely, any  pseudo-Euclidean product on $\G$
gives rise
to an unique  left-invariant pseudo-Riemannian metric on $G$.\\ We will refer to a Lie
algebra endowed with a  pseudo-Euclidean product as a \emph{pseudo-Euclidean Lie algebra}.  The
Levi-Civita connection of $(G,g)$ defines a product $\mathrm{L}:\G\times\G\too\G$ called the Levi-Civita product and given by  Koszul's
formula:
\begin{eqnarray}\label{levicivita}2\langle
\mathrm{L}_uv,w\rangle&=&\langle[u,v],w\rangle+\langle[w,u],v\rangle+
\langle[w,v],u\rangle.\end{eqnarray}
For any $u,v\in\G$, $\mathrm{L}_{u}:\G\too\G$ is skew-symmetric and $[u,v]=\mathrm{L}_{u}v-\mathrm{L}_{v}u$. We will also write $u.v=\mathrm{L}_{u}v$.
The Riemannian curvature on $\G$ is given by:
\begin{equation}\label{curvature}
\Ku(u,v)=\mathrm{L}_{[u,v]}-[\mathrm{L}_{u},\mathrm{L}_{v}].
\end{equation}
It is well-known that $\Ku$ is a curvature tensor on $(\G,\prs)$ and, moreover, it satisfies the differential Bianchi identity
\begin{equation}\label{2bianchi}
\mathrm{L}_u(\Ku)(v,w)+\mathrm{L}_v(\Ku)(w,u)+\mathrm{L}_w(\Ku)(u,v)=0,\quad u,v,w\in\G
\end{equation}where $\mathrm{L}_u(\Ku)(v,w)=[\mathrm{L}_u,\Ku(v,w)]-\Ku(\mathrm{L}_uv,w)-\Ku(v,\mathrm{L}_uw).$
Denote by $\mathfrak{h}(\G)$ the holonomy Lie algebra of $(G,g)$. It is the smallest Lie algebra containing  $\mathfrak{h}(\Ku)=\mathrm{span}\{\Ku(u,v):u,v\in\G \}$ and satisfying $[\mathrm{L}_u,\mathfrak{h}(\G)]\subset\mathfrak{h}(\G)$, for any $u\in\G$.\\

$(G,g)$ is semi-symmetric iff $\Ku$ is a semi-symmetric curvature tensor of $(\G,\prs)$.  Without reference to any Lie group, we call a pseudo-Euclidean Lie algebra $(\G,\prs)$ semi-symmetric  if its curvature is semi-symmetric.

Let $(\G,\prs)$ be a semi-symmetric Lie algebra with metric $\prs$ of signature $(2,n)$ such that $n\geqslant2$. According to Proposition \ref{pr2}, $\G$ splits orthogonally as
\begin{equation}\label{split}\G=\G_0\oplus \G_1\oplus\ldots\oplus \G_r,
\end{equation}where $\G_0=\ker(\mathrm{Ric}^2)$ and $\G_1,\ldots,\G_r$ are the eigenspaces associated to the  non zero eigenvalues of $\mathrm{Ric}$,

 or 
\begin{equation}\label{split2}\G=\G_0\oplus \G_c\oplus \G_1\oplus\ldots\oplus \G_r,
\end{equation}where $\G_0=\ker(\mathrm{Ric})$,  $\G_c=\ker(\mathrm{Ric}^2-(z+\bar{z})\mathrm{Ric}+\mid z\mid^2 I)$ such that  $z$ is  non  real eigenvalue of $\mathrm{Ric}$ and $\G_1,\ldots,\G_r$ are the eigenspaces associated to the  non zero real eigenvalues of $\mathrm{Ric}$.\\
Moreover, $\Ku(\G_i,\G_j)=0$ for any $i\not=j$, $\dim(\G_c)=4$ and $\dim\G_i\geq2$ if $i\not=0$. According the Proposition\ref{pr121}, the following proposition gives  more properties of the $\G_i$'s involving the Levi-Civita product.


\begin{pr}\label{pr7} Let $(\G,\prs)$ be a semi-symmetric Lie algebra  with metric $\prs$ of signature $(2,n)$ such that $n\geqslant2$. Then, for any $i,j\in\{c,1,\ldots,r\}$ and $i\not=j$,
	\begin{equation*}\label{l}
	{\G_j}.\G_i\subset \G_i, \;  {\G_i}.\G_i\subset \G_0+\G_i, \;
	{\G_0}.\G_i\subset \G_i, \;
	{\G_0}.\G_0\subset \G_0, \;  {\G_i}.\G_0\subset \G_0+\G_i.
	\end{equation*}
\end{pr}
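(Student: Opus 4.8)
The plan is to deduce Proposition \ref{pr7} from its manifold counterpart, Proposition \ref{pr121}, applied to $M=G$ with its left-invariant metric, exploiting that all the relevant distributions are left-invariant. First I would note that, since $\prs$ is left-invariant, so is the Ricci operator of $(G,g)$: at a point $g\in G$ one has $\Ri_g=dL_g\circ\Ri_e\circ dL_g^{-1}$, where $\Ri_e$ is identified with the Ricci operator of the curvature tensor $\Ku$ on $(\G,\prs)$. Hence for every real polynomial $P$ the subspace $\ker(P(\Ri_g))$ equals $dL_g\big(\ker(P(\Ri_e))\big)$, so each of the summands in \eqref{split} and \eqref{split2} spans a left-invariant distribution on $G$, and these distributions are exactly the $V_0,V_1,\ldots,V_r$ of Proposition \ref{pr121} — with the $4$-dimensional neutral block $\G_c$ occurring as one of the $V_i$, $i\geq1$, because $\Ri$ restricted to $\G_c$ has irreducible minimal polynomial $X^2-(z+\bar z)X+|z|^2$.

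Next I would use that for left-invariant vector fields $u,v$ the Levi-Civita connection of $(G,g)$ reduces to the Levi-Civita product, $\na_uv=\Lu_uv=u.v$ (Koszul's formula \eqref{levicivita}), and that $u.v$ is again left-invariant, hence determined by its value at $e$. Evaluating the five inclusions of Proposition \ref{pr121} at the identity $e$, where $T_eG=\G$, $V_0(e)=\G_0$ and $V_i(e)=\G_i$ (or $\G_c$), then yields at once
\[
\G_j.\G_i\subset\G_i,\quad \G_i.\G_i\subset\G_0+\G_i,\quad \G_0.\G_i\subset\G_i,\quad \G_0.\G_0\subset\G_0,\quad \G_i.\G_0\subset\G_0+\G_i
\]
for $i\neq j$ in $\{c,1,\ldots,r\}$, which is the assertion.

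The only point that is not purely mechanical, and the one I would be most careful about, is reconciling the two descriptions of the degenerate block: in \eqref{split2} one takes $\G_0=\ker(\Ri)$, whereas Proposition \ref{pr121} uses $V_0=\ker(\Ri^2)$. This is harmless here, because in the complex-Ricci case $\G_c^\perp$ is Riemannian by Proposition \ref{pr2}, so $\Ri$ is diagonalizable on $\G_c^\perp$ and has no nilpotent part on $\G_c$ either; thus $\ker(\Ri^2)=\ker(\Ri)$ and the decomposition \eqref{split2} coincides with the one used in Proposition \ref{pr121}. Once this identification is in place no further computation is needed — Proposition \ref{pr121} does all the work, and the differential Bianchi identity \eqref{2bianchi} enters only implicitly, through the proof of that proposition. (If a self-contained argument were preferred, one could instead repeat the proof of Proposition \ref{pr121} directly on $(\G,\prs)$, with \eqref{2bianchi} in place of the second Bianchi identity and $\Lu$ in place of $\na$; this is longer and gives nothing new.)
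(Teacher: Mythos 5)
Your proposal is correct and follows essentially the same route as the paper, which states Proposition \ref{pr7} precisely as the left-invariant specialization of Proposition \ref{pr121} (the paper offers no separate argument beyond citing that proposition). Your added details — left-invariance of the distributions $\ker(P(\Ri))$, evaluation of $\na_uv=\Lu_uv$ at the identity, and the observation that $\ker(\Ri^2)=\ker(\Ri)$ in the complex-Ricci case so that \eqref{split2} matches the decomposition of Proposition \ref{pr121} — are exactly the points the paper leaves implicit, and they are handled correctly.
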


Let $(G,g)$ be a four dimensional semi-symmetric neutral Lie group with Ricci curvature having a non zero eigenvalue
and,   according to \eqref{split} , \eqref{split2} and Proposition \ref{pr7}, the Lie algebra $\G$ of $G$ has one of the following types:
\begin{enumerate} \item[]$(S4\la)$: $\dim\G=4$ and $\G=\G_\la$ with $\la\not=0$.
	\item[]$(S4\mu\la)$:    $\G=\G_\mu\oplus\G_\la$ with $\dim\G_\mu=\dim\G_\la=2$,  $\la\not=\mu$,  $\la\not=0$, $\mu\not=0$, $\G_\mu.\G_\la\subset\G_\la$, $\G_\la.\G_\mu\subset\G_\mu$, $\G_\la.\G_\la\subset\G_\la$ and $\G_\mu.\G_\mu\subset\G_\mu$.
	\item[]$(S4\la0^1)$ : $\G=\G_0\oplus\G_\la$ with $\dim\G_0=1$, $\G_0.\G_\la\subset\G_\la$, $\G_0.\G_0\subset\G_0$ and $\la\not=0$.
	\item[]$(S4\la0^2)$:   $\G=\G_0\oplus\G_\la$ with $\dim\G_\la=2$, $\G_0.\G_\la\subset\G_\la$,  $\G_0.\G_0\subset\G_0$ and $\la\not=0$,
	\item[]$(Sz\bar{z})$: $\G=\ker(\mathrm{Ric}^2-(z+\bar{z})\mathrm{Ric}+\mid z\mid^2 I)$ with $z\in\C-\R$.
\end{enumerate}where  $\G_\la=\ker(\Ri-\la \mathrm{Id}_\G)$ and $\G_0=\ker(\Ri)$. 


   \begin{pr}\label{pr45}: \textbf{Ricci is neither flat nor isotropic}\\  Let $(\G,\prs)$ be a four dimensional semi-symmetric neutral Lie algebra with Ricci curvature  admitting a non-zero  eigenvalue. Then $\G$ is a symmetric space. Precisely, one of the following cases occurs:
     \begin{enumerate} \item $\G$ is of type $(S4\mu\la)$.   Then $\G_\la.\G_\mu=\G_\mu.\G_\la=0$  and   $\G$ is a product  of two  Lie  algebras  with the same  metric et and the same dimension $2$.
 \item $\G$  is of type $(S4\la0^1)$.  Then $\G.\G_0=0$, $\G_\la.\G_\la\subset\G_\la$ and hence $\G$ is the semi-direct product of  $\G_0$  with the three dimensional Lorentzian Lie algebra $\G_\la$ of constant curvature and the action of $\G_0$ on $\G_\la$ is by a skew-symmetric derivation.
    \item $\G$  is of type $(S4\la0^2)$.  Then  $\G_0.\G=0$, $\G_\la.\G_\la\subset\G_\la$, $\G_\la.\G_0\subset\G_0$ and hence $\G$ is the semi-direct product of the pseudo-Euclidean Lie algebra $\G_\la$  with the abelian Lie algebra $\G_0$ and the action of $\G_\la$ on $\G_0$ is given by skew-symmetric endomorphisms.
		\item $\G$ is of type $(S4\la)$  with  $\la\neq 0 $. In this case, we get   $\dim(\h( \Ku))\in\{2,4,6\}$.
		\item $\G$ is of type $(S4z\bar{z})$. In this case,  $\dim(\h( \Ku))=2$.
        \end{enumerate}
        \end{pr}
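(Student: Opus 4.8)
The plan is to run through the five types of $\G$ listed before the statement one at a time, in each case combining the classification of semi-symmetric curvature tensors on a four-dimensional neutral space (Proposition \ref{pr43} and Corollary \ref{co4-21}) with the Lie-theoretic structure equations: Koszul's formula \eqref{levicivita}, the curvature formula \eqref{curvature}, the relation $[u,v]=\mathrm{L}_uv-\mathrm{L}_vu$, and the differential Bianchi identity \eqref{2bianchi}. Since $\na R=0$ is equivalent to $\mathrm{L}_u(\Ku)=0$ for all $u\in\G$, the common endgame in every case is to establish this last identity, and the dimension of $\h(\Ku)$ is then read off from whichever curvature normal form survives.

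For the three reducible types $(S4\mu\la)$, $(S4\la0^1)$ and $(S4\la0^2)$ I would first use Proposition \ref{pr7} to confine $\mathrm{L}$ to be block-triangular with respect to the orthogonal splitting of $\G$. The key step is to kill the remaining off-diagonal terms: feeding Koszul's formula \eqref{levicivita} applied to mixed pairs into $\Ku(u,v)=\mathrm{L}_{[u,v]}-[\mathrm{L}_u,\mathrm{L}_v]$ and using $\Ku(\G_i,\G_j)=0$ for $i\neq j$ together with the explicit shape of $\Ku$ on each block from Proposition \ref{pr43} forces, in type $(S4\mu\la)$, $\G_\la.\G_\mu=\G_\mu.\G_\la=0$; hence $[\G_\la,\G_\mu]=0$ and $\G=\G_\mu\times\G_\la$ is a product of two pseudo-Euclidean Lie algebras of dimension $2$. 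The analogous computation in types $(S4\la0^1)$ and $(S4\la0^2)$ yields the asserted semidirect-product structures, the action of one factor on the other being by skew-symmetric derivations, resp. skew-symmetric endomorphisms. Once the algebraic structure is pinned down, symmetry is automatic: a pseudo-Riemannian product is locally symmetric iff each factor is; a two-dimensional homogeneous pseudo-Riemannian space has constant curvature and hence is symmetric; and for the semidirect products one checks $\mathrm{L}_u(\Ku)=0$ directly, since $\mathrm{L}$ is now block-diagonal, so $\mathrm{L}_u$ preserves $\h(\Ku)$ and, with the explicit normal form of $\Ku$ from Proposition \ref{pr43}, the vanishing of $\mathrm{L}_u(\Ku)$ is immediate, the few remaining components being controlled by \eqref{2bianchi}.

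For the two rigid types $(S4\la)$ (Einstein with nonzero scalar curvature) and the complex type $(S4z\bar z)$ there is no splitting to exploit, so I would argue from the finite list of admissible curvature tensors. By Corollary \ref{co4-21}, in the Einstein case $\Ku$ is, up to a scalar and a choice of adapted basis, one of finitely many normal forms, all with $\dim\h(\Ku)\in\{2,4,6\}$, while in the complex case $\Ku=a(\pi_1\vee\pi_1-\pi_2\vee\pi_2)+b\,\pi_1\vee\pi_2$ with $\dim\h(\Ku)=2$. For each normal form I would take the $\mathrm{L}_ue_j$ as unknowns, impose skew-symmetry of every $\mathrm{L}_u$ together with the curvature identity $\Ku(u,v)=\mathrm{L}_{[u,v]}-[\mathrm{L}_u,\mathrm{L}_v]$ (with $[u,v]=\mathrm{L}_uv-\mathrm{L}_vu$), solve the resulting quadratic system, and verify on each solution that $\mathrm{L}_u(\Ku)=0$; here \eqref{2bianchi} limits the number of components to be checked. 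This is where the computer algebra mentioned in the introduction enters, and its output simultaneously records which curvature normal forms admit a compatible Levi-Civita product, yielding $\dim\h(\Ku)\in\{2,4,6\}$ for $(S4\la)$ and $\dim\h(\Ku)=2$ for $(S4z\bar z)$.

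The hard part is precisely these two rigid types. The content is not to exhibit one symmetric metric but to show that \emph{every} solution of the quadratic system for $\mathrm{L}$ has parallel curvature, and at the same time to discard the a priori admissible configurations with $\dim\h(\Ku)\in\{1,3\}$: such curvature tensors do occur in Proposition \ref{pr43}, but they cannot be realized by a four-dimensional neutral Lie algebra carrying the relevant nonzero Ricci eigenvalue. In practice this amounts to solving the structure equations for $\mathrm{L}$ and checking $\mathrm{L}_u(\Ku)=0$ on every branch of the solution set --- made feasible by first using \eqref{2bianchi} to cut down the independent components --- and observing that the surviving branches are exactly those for which $\h(\Ku)$ is $\mathrm{so}(2,2)$, a product holonomy, or the holonomy algebra of a neutral space carrying a compatible complex structure, in agreement with the classification to follow.
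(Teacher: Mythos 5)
Your route is genuinely different from the paper's, which proves Proposition \ref{pr45} almost entirely by reduction: for the reducible types $(S4\mu\la)$, $(S4\la0^1)$, $(S4\la0^2)$ the paper notes that each eigenspace is Riemannian or Lorentzian and invokes the corresponding arguments of the Lorentzian classification \cite{benromane}; for type $(S4\la)$ it does not touch the Lie structure equations at all but applies Theorem \ref{theo2} --- i.e.\ Derdzinski's Theorem \ref{derd} on Einstein four-manifolds with complex-diagonalizable self-dual curvature and constant eigenvalues --- to the Lie group viewed as a homogeneous Einstein manifold with nonzero scalar curvature; and for type $(Sz\bar z)$ it uses the complex structure $J=\frac1b(\Ri-aI)$ and the complexification/parallel-eigendistribution argument that reappears in the proof of Theorem \ref{main}. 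Your plan --- block-triangularize $\mathrm{L}$ via Proposition \ref{pr7}, then for each normal form of Corollary \ref{co4-21} solve the quadratic system $\Ku(u,v)=\mathrm{L}_{[u,v]}-[\mathrm{L}_u,\mathrm{L}_v]$, $[u,v]=\mathrm{L}_uv-\mathrm{L}_vu$, and check $\mathrm{L}_u(\Ku)=0$ on every branch --- is self-contained and in principle workable (it is essentially the computation of \cite{ali}); what it buys is independence from \cite{benromane} and from Derdzinski's theorem, and what the paper's route buys is the avoidance of exactly that large computation.

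Two points, however, need attention. First, in items 4 and 5 the statements $\dim\h(\Ku)\in\{2,4,6\}$ and $\dim\h(\Ku)=2$ are purely algebraic consequences of Corollary \ref{co4-21}: the normal forms with $\dim\h(\Ku)\in\{1,3\}$ are excluded because their Ricci operators are not Einstein with $\la\neq0$ nor of complex type, not because they fail to be realized by a Lie algebra, so your proposed ``discarding by realizability'' step is unnecessary and slightly misplaces where the dimension claims come from. Second, and more seriously, the steps that carry the real weight are only asserted: in type $(S4\mu\la)$ the vanishing $\G_\la.\G_\mu=\G_\mu.\G_\la=0$ does not follow formally from $\Ku(\G_\la,\G_\mu)=0$ plus Koszul's formula --- it is precisely the nontrivial case analysis the paper imports from \cite{benromane}, and in the neutral setting both two-dimensional factors may have signature $(1,1)$, a configuration absent from the Lorentzian case, so it is not automatic that the cited arguments transfer verbatim; and in the rigid types $(S4\la)$ and $(Sz\bar z)$ the claim that every solution branch of the quadratic system has parallel curvature is exactly the content to be proved, which your write-up defers to an unexecuted computer calculation. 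As a strategy the proposal is sound, but as a proof it leaves these decisive verifications open, whereas the paper closes them by citation (reducible types), by Theorem \ref{theo2} (Einstein type), and by the complexification argument (complex-Ricci type).
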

 \begin{proof}$ $

 -For types $(S4\mu\la)$, $(S4\la0^1)$ and $(S4\la0^2)$,  $\Ri$  admits  two real eigenvalues. Then each  eigenspace   is either Lorentzian or Riemannian and   the demonstration of similar cases in \cite{benromane}  remains valid  in  the current situation.

-For type $(S4\la)$, it is a result of the theorem(\ref{theo2}).

-For type $(S4z\bar{z})$, it is the same proof in the case Ricci complex for a  homogeneous semi-symmetric manifolds.
 \end{proof}


According to this proposition, we get the following theorem:
 \begin{theo}
 Let $G$ be a four-dimensional connexe   simply connected  neutral Lie group. If   
    $(G,g)$ is  semi-symmetric space  admitting a left invariant metric $g$ and  it's Ricci cuvature  admits   no zero eigenvalue, then  $G$ is   localy symmetric.
 \end{theo}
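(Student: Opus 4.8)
The goal is to deduce the theorem from Proposition \ref{pr45}, which is the substantive structural result: for a four-dimensional semi-symmetric neutral Lie algebra $(\G,\prs)$ whose Ricci operator has a non-zero eigenvalue, $\G$ is a symmetric space, with an explicit description of $\G$ in each of the five types $(S4\mu\la)$, $(S4\la0^1)$, $(S4\la0^2)$, $(S4\la)$, $(S4z\bar z)$. So the plan is essentially to translate that algebraic symmetry statement into the manifold-level conclusion and then invoke the classical theory of simply connected symmetric spaces.

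First I would recall the standard fact that a left-invariant pseudo-Riemannian metric on a Lie group $G$ is locally symmetric if and only if the Levi-Civita product $\mathrm{L}$ on $\G$ satisfies $\mathrm{L}_u(\Ku)=0$ for all $u\in\G$, equivalently $\na R=0$; and that this is exactly what "$\G$ is a symmetric space" means in the terminology of the paper (the curvature tensor $\Ku$ of \eqref{curvature} is $\mathrm{L}$-parallel). Then, since $(G,g)$ is given to be semi-symmetric with left-invariant metric, its Ricci operator $\Ri$ at $e$ is exactly the Ricci operator of the semi-symmetric curvature tensor $\Ku$ on $(\G,\prs)$; by hypothesis $\Ri$ has a non-zero eigenvalue. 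Hence $(\G,\prs)$ falls under the hypotheses of Proposition \ref{pr45}, and we conclude that $\G$ is a symmetric space, i.e. $\na R=0$ on $G$.

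Next I would pass from local to global: $(G,g)$ is then a locally symmetric pseudo-Riemannian manifold which is geodesically complete (left-invariant metrics on Lie groups need not be complete in the pseudo-Riemannian setting in general, but here one argues instead directly with the transvection group, or invokes that a simply connected locally symmetric space is covered by a globally symmetric one). More precisely, the simply-connectedness of $G$ together with $\na R=0$ guarantees, by the Cartan–Ambrose–Hicks theorem, that $(G,g)$ is a (globally) Riemannian--symmetric space: there is an isometric symmetry at every point. So $G$ is a symmetric space in the strong sense, and in particular it is locally symmetric, which is the assertion of the theorem.

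The only genuine obstacle is the local-to-global step, i.e. justifying that a simply connected $4$-dimensional neutral Lie group which is locally symmetric is actually a symmetric space (completeness). I expect this to be handled either by a direct appeal to Cartan–Ambrose–Hicks in the pseudo-Riemannian category, or, type by type using the explicit descriptions in Proposition \ref{pr45} (products of $2$-dimensional Lie algebras, semidirect products with constant-curvature factors, etc.), each of which is manifestly complete; the remaining content of the theorem is then immediate. I would present it in the former, cleaner way.
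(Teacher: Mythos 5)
Your proof follows essentially the same route as the paper: the theorem is deduced directly from Proposition \ref{pr45}, which asserts that a four-dimensional semi-symmetric neutral Lie algebra whose Ricci operator has a non-zero eigenvalue is symmetric (parallel curvature), and by left-invariance this gives $\nabla R=0$ on all of $G$, i.e. local symmetry. Your extra local-to-global step via Cartan--Ambrose--Hicks is superfluous for the stated conclusion (and would anyway need completeness, which, as you note, is not automatic for left-invariant pseudo-Riemannian metrics), since the theorem only claims that $G$ is locally symmetric.
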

\begin{co}Let $\G$ be a four-dimensional semi-symmetric  nonsymmetric neutral Lie algebra. Then, its Ricci operator  satisfies  the condition $$\Ri^2 = 0 $$
i.e; Ricci is plat or istrope. Precisely, Ricci has only $0$  eigenvalue.  
  \end{co}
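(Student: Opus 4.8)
The plan is to prove the contrapositive, combining Proposition~\ref{pr45} with the description of the minimal polynomial of $\Ri$ supplied by Proposition~\ref{pr1-3-1}. So suppose $(\G,\prs)$ is a four-dimensional semi-symmetric neutral Lie algebra whose Ricci operator $\Ri$ admits a non-zero eigenvalue, real or non-real complex. Then the orthogonal splittings \eqref{split}, \eqref{split2} together with Proposition~\ref{pr7} force $\G$ into one of the five types $(S4\la)$, $(S4\mu\la)$, $(S4\la0^1)$, $(S4\la0^2)$, $(Sz\bar z)$ isolated just before Proposition~\ref{pr45}, and that proposition states precisely that $\G$ is a symmetric space in each of these cases (i.e. $\mathrm{L}_u(\Ku)=0$ for all $u\in\G$). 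Contrapositively: if $\G$ is semi-symmetric and not symmetric, then $\Ri$ has no non-zero eigenvalue, so $0$ is its only eigenvalue over $\C$.

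It then remains to upgrade ``all eigenvalues are $0$'' to ``$\Ri^2=0$''. Since $\G$ is semi-symmetric, its curvature $\Ku$ is a semi-symmetric curvature tensor on $(\G,\prs)$, so Proposition~\ref{pr1-3-1}(1) applies: the minimal polynomial $\chi$ of $\Ri$ factors as $\chi=\prod_i P_i$ with the $P_i$ pairwise coprime and each $P_i$ either irreducible or equal to $X^2$. Because every root of $\chi$ vanishes, no $P_i$ can be an irreducible polynomial other than $X$; hence each $P_i\in\{X,X^2\}$, and pairwise coprimality leaves exactly one factor. Thus $\chi=X$, in which case $\Ri=0$ (Ricci flat), or $\chi=X^2$, in which case $\Ri\neq0$ but $\Ri^2=0$ (Ricci isotropic). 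In either case $\Ri^2=0$ and $0$ is the sole eigenvalue of $\Ri$, which is the assertion.

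There is essentially no obstacle beyond bookkeeping, since the statement is a formal consequence of the two quoted propositions. The only points that deserve care are, first, to be sure the non-real complex eigenvalue situation is genuinely covered — this is exactly why type $(Sz\bar z)$ had to be included in Proposition~\ref{pr45} — and, second, to note that ``$(G,g)$ locally symmetric'' translates at the Lie-algebra level into $\mathrm{L}_u(\Ku)=0$ for all $u$, i.e. $\G$ being a symmetric pseudo-Euclidean Lie algebra, so that the preceding theorem on simply connected neutral Lie groups delivers exactly the Lie-algebra statement needed here.
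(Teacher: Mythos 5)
Your proposal is correct and follows essentially the same route as the paper: the corollary is obtained there as the immediate contrapositive of Proposition \ref{pr45} (and the theorem preceding the corollary), which state that a non-zero eigenvalue of $\Ri$ forces the Lie algebra to be symmetric. Your extra step invoking Proposition \ref{pr1-3-1} to pass from ``all eigenvalues vanish'' to ``$\Ri^2=0$'' is exactly the (implicit) justification the paper relies on for the phrase ``Ricci is flat or isotropic''.
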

 \begin{rem} There are some four dimensional neutral  semi-symmetric non-symmetric  Lie algebras   with
   Ricci plat:

 \textbf{ Example:} Let $\G=vect(x,y,z,t)$ be a Lie algebra equipped with a metric $\prs$ given by: $\langle x,z\rangle=\langle y,t\rangle
=1$ and the non zero  brackets are:\\   $[x,y]= Ax+Bt$, $[x,z]= 2Dx$, $[y,z]= Cx-Dy+Az$, $[y,t]= -2At \esp [z,t]=-\frac{AD}{B}x-Dt.$\\
   Then $\G$ is semi-symmetric non symmetric   with  Ricci plat  and the   courvature:  $$ R= 4AC. A_{x,t}\vee A_{x,t} .$$

This example makes  the difference between the Lorentzian case and the case of the signature  $ (2,2) $: In the first case, the semi-symmetric Lie algebras of Ricci flat are flat and locally symmetrical. \end{rem}

\begin{rem} \end{rem} let $\G$ be a four-dimensional semi-symmetric   neutral Lie algebra with isotropic  Then, its Ricci. Then, two cases are   possibles: $rank(\Ri)=1$ or $rank(\Ri)=2$ and     $\dim(\h(  \Ku))\in\{1,2\} $  
and we get  the following Proposition:
  \begin{pr}\label{isotp1}
  Let $(\G,\prs)$ be a four dimensional semi-symmetric Lie algebra with  $ \Ku$ the curvature tensor and    $\Ri$    the Ricci operator. 
	    \begin{enumerate}
\item  If $\Ku\neq 0$ and $\Ri=0$, then,    there is a basis  $(x,y,z,t)$ such that  $\langle x, t \rangle=\langle y,z\rangle=1 $ and   $ \Ku=a A_{x,z}\vee A_{x,y}$, where $a\in\R^*$.
	\item If $\Ri\neq \Ri^2=0$.  Then, one of the following situations is checked:

    \begin{enumerate}
  \item  $\mathfrak{h}( \Ku)$ is of  type $1.4^1$.  Then, there is a basis  $(x,y,z,t)$ such that  $\langle x, z \rangle=-\langle y , y \rangle=\langle t , t \rangle=1 $
                         and   $  \Ku=q A_{x,y}\vee A_{x,y}$, $\Ri= -q(x\vee x )$, $q\neq0$.  
 \item  $\mathfrak{h}( \Ku)$  is of type  $2.5^1$.   Then, there is a basis  $(x,y,z,t)$ such that  $\langle x, z \rangle=\langle y , t \rangle=1 $ and   \\$ \Ku=r A_{x,y}\vee A_{x,y}+ p.A_{x,t}\vee A_{x,t}+ q.A_{x,y}\vee A_{x,t}$,  $~~\Ri= q(x\vee x )$, $p\neq0\neq q$ and $r\neq0$.
  \item $\mathfrak{h}( \Ku)$  is of type  $2.2^1$.   Then, $\dim\mathfrak{h}(\Ku)=2$  and there is a basis $(x,y,z,t)$ such that   $\langle x, z \rangle=\langle y , t \rangle=1 $ and \\ $\Ku=s A_{x,t}\vee A_{x,t}+ p.((A_{x,z}+ A_{y,t})\vee .A_{x,t})$,  $\Ri= - p(x\vee t)$, $p\neq0\neq s$.

          \item $ \mathfrak{h}( \Ku)$  is of type  $2.2^2$.   Then,     there is a basis  $(x,y,z,t)$such that   $\langle x, t \rangle=\langle y , z \rangle=1 $ and  \\$ \Ku=p A_{x,y}\vee A_{x,y}+ q.A_{x,y}\vee (A_{x,z}+A_{y,t})$  et $\Ri= q(x\vee x - y\vee y)$ , $p\neq0\neq q$.
     \end{enumerate}
     \end{enumerate}
         \end{pr}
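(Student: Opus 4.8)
The plan is to combine the Lie-algebra data with the purely algebraic classification already obtained. By Corollary \ref{co4-21}, a semi-symmetric curvature tensor on the neutral vector space $(\G,\prs)$ whose Ricci operator satisfies $\Ri^{2}=0$ is, after an orthogonal change of basis, one of finitely many normal forms: the Ricci-flat form of item 2 of that corollary, or one of the three isotropic-Ricci forms of item 3. In particular the primitive holonomy $\h(\Ku)$ is conjugate in $\mathfrak{so}(2,2)$ to one of a short list of low-dimensional Komrakov subalgebras (essentially $1.3^{1}$, $1.4^{1}$, $2.2^{1}$, $2.5^{1}$), so, up to the values of its coefficients, $\Ku$ already has one of the shapes displayed in the statement. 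What remains is (i) to discard the algebraic tensors on this list that are not the curvature of any pseudo-Euclidean Lie algebra, and (ii) for the survivors, to pin down the constraints on the coefficients and to normalize the basis.

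For (i)--(ii) I would run through the candidate holonomy types one at a time. For a fixed type I work in its adapted basis and write down the most general $\prs$-skew Levi-Civita product $\Lu:\G\times\G\too\G$ --- equivalently the most general Lie bracket, through Koszul's formula \eqref{levicivita} --- and impose that its curvature $\Lu_{[u,v]}-[\Lu_{u},\Lu_{v}]$, formula \eqref{curvature}, reproduces the prescribed $\Ku$, together with the second Bianchi identity \eqref{2bianchi} and the Jacobi identity. Before solving, Proposition \ref{pr7} (equivalently Proposition \ref{pr121}) is used to kill many unknowns at once: the generalized $\Ri$-eigenspaces must be permuted by every $\Lu_{u}$ exactly as stated there and $\Ku$ must vanish on the mixed planes, so a great many components of $\Lu$ are forced to be zero at the outset. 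The remaining equations are polynomial in the structure constants and in the free curvature coefficients; their complete solution --- carried out with a computer algebra system, as the authors indicate --- at once eliminates the non-realizable forms and, on the realizable families, produces exactly the vanishing and non-vanishing relations of the statement (for example the coefficient of $A_{x,y}\vee A_{x,t}$ is annihilated in the Ricci-flat case, and in item 2 coefficients that were still free in Corollary \ref{co4-21} are now forced to be non-zero). A final step normalizes each surviving family by the action of the stabilizer of $\Ku$ in $O(2,2)$, which still acts on the space of compatible brackets; this is what brings the curvature either to the type $2.2^{1}$ representative or to the (conjugate but explicitly distinct) type $2.2^{2}$ representative of item 2, and it also fixes the rank and image of $\Ri$, so that the dichotomy $\Ri=0$ versus $\Ri\neq\Ri^{2}=0$ separates item 1 from item 2 while the four sub-cases of item 2 come out exhaustive and pairwise disjoint.

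The main obstacle is the sheer size and delicacy of this case analysis: several holonomy types must be traversed, for each of them the space of admissible Levi-Civita products is several parameters wide, and the combined curvature, second-Bianchi and Jacobi conditions form a non-linear system whose full resolution --- and whose subsequent reduction modulo the residual orthogonal symmetry --- is where essentially all the work sits. A secondary but genuine point is that many curvature tensors which are \emph{algebraically} semi-symmetric arise from no Lie algebra at all; the combination of \eqref{levicivita}, \eqref{curvature} and the second Bianchi identity \eqref{2bianchi} is precisely what excludes these, and keeping that distinction straight while moving through the Komrakov types is the bookkeeping that must be done with care.
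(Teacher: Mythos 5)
Your overall strategy is the one the paper itself relies on: for this proposition the paper prints no argument beyond the preceding remark that $\mathrm{rank}(\Ri)\in\{1,2\}$ and $\dim\h(\Ku)\in\{1,2\}$ together with the blanket statement that the computations of Section \ref{section4bis} were done by software, so reducing to the algebraic normal forms of Section \ref{section4}, writing the general Koszul product for each candidate holonomy type, using Proposition \ref{pr7} (equivalently Proposition \ref{pr121}) to kill mixed components, and then solving the curvature, second Bianchi and Jacobi equations by computer before normalizing the basis is, in substance, the intended proof.

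There is, however, one concrete gap in your reduction. You take the admissible shapes from Corollary \ref{co4-21}, whose isotropic-Ricci item lists only the three forms with holonomy of types $1.4^1$, $2.2^1$ and $2.5^1$; but the statement you must prove also contains case 2(d), of type $2.2^2$, with $\Ku=p\,A_{x,y}\vee A_{x,y}+q\,A_{x,y}\vee(A_{x,z}+A_{y,t})$ and $\Ri=q(x\vee x-y\vee y)$, a shape that occurs neither in that corollary nor in the theorem of Section \ref{section4}. Your proposed mechanism for recovering it, namely that the final normalization by the stabilizer of $\Ku$ in $O(2,2)$ carries the $2.2^1$ family onto a ``conjugate but explicitly distinct'' $2.2^2$ representative, cannot work: in Komrakov's classification \cite{komrakov} the labels $2.2^1$ and $2.2^2$ denote non-conjugate subalgebras of $\mathfrak{so}(2,2)$, so no change of basis converts one holonomy type into the other, and a case analysis driven only by the corollary's list would simply never produce case 2(d); you must add $2.2^2$ (with its spaces $R(\G)$ and $\G_{sym}$) to the candidate holonomy algebras before running the realizability computation. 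A smaller point of the same kind concerns your claim that the output ``comes out exactly'' as in the statement: your candidate list contains $1.3^1$, and the example in the Remark preceding the proposition exhibits a Lie algebra whose curvature is the rank-one square $4AC\,A_{x,t}\vee A_{x,t}$ of type $1.3^1$ with flat Ricci, which is not of the off-diagonal form $a\,A_{x,z}\vee A_{x,y}$ of item 1; your write-up asserts agreement with the statement without addressing how this realizable type is disposed of, so that step needs to be made explicit rather than delegated to the machine.
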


 In \cite{ali},   A. Haji-Badali and A. Zaeim give a complet classification of 
 four-dimensional   semi-symmetric nonsymmetric  neutral Lie algebras.


    \section{Proof of Theorem \ref{theo2} and \ref{main}}   \label{section3}

       Let $(M,g)$ be a four dimensional neutral  manifold and the tensor curvature $ R $ is considered a symmetric endomorphism in the space  $\Lambda^2 TM$; 
\begin{equation}\label{courburehomo}
\begin{array}{cccc}
 R : & \Lambda^2 TM & \rightarrow & \Lambda^2 TM \\
   & x\we y & \mapsto& {R}(x\we y):=R(x,y).
\end{array}
  \end{equation}
  Let  $J:~~ \Lambda^2TM  \rightarrow  \Lambda^2 TM $ be  a \texttt{ Hodge}  morphism given by:  \[\al \we \be= \langle J \al, \be \rangle_1 \om,\] for all  $m\in M$, $\al$, $\be \in \Lambda^2 T_mM$, $\om=e_1\we e_2\we e_3\we e_4$, such that  $(e_1,e_2,e_3,e_4)$ is a positive-oriented orthonormal basis of  $T_mM$  and  $\prs_1$ is the metric of  $\Lambda^2 T_mM$ induced by  $g$. It's easy to proof that $J^2=id_{\Lambda^2 TM}$ and we put  $\Lambda^+ T_mM$ and $\Lambda^- T_mM$ the eigenspaces of $J_m$ associated respectively to eigenvalues $1$ and  $-1$, they are the same dimension $3$.

   On the other hand, if   $R$ is   the Einstein curvature, we get,
   $$J\circ R=R\circ J.$$
  Therefore, $\Lambda^+ TM$ and $\Lambda^- TM$ are  invariant  by $R$.

     Moreover,
  \[e_1\we e_2\pm e_3\we e_4,~e_1\we e_3\mp e_2\we e_4,~~\text{and}~~e_1\we e_4\pm e_2\we e_3\] is a basis of  $\Lambda^\pm T_mM$.
    The proof is based on Corollary \ref{co4-21}   and the following   theorem proved in \cite{derd} 
    
    \begin{theo}\label{derd}\cite{derd} Let the self-dual curvature operator ${R^+} :  \Lambda^+ TM\rightarrow \Lambda^+ TM $   of
an oriented four-dimensional Einstein manifold $(M, g)$ of the metric signature
$(2,2)$  be complex-diagonalizable at every point, with complex eigenvalues forming
constant functions $M \longrightarrow \C$. If  $\nabla R^+ \neq 0$  somewhere in $M$, then $(M, g)$ is locally homogeneous, namely, locally isometric to a Lie group with a left-invariant metric.
More precisely, $(M, g)$ then is locally isometric to one of Petrov's Ricci-flat
manifolds,	
    \end{theo}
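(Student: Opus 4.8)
The plan is to follow Derdzinski's strategy, treating the self-dual curvature operator $R^+$ as a self-adjoint endomorphism of the rank-three bundle $\Lambda^+TM$. First I would record the standard splitting of the curvature operator of an oriented four-manifold into self-dual and anti-self-dual blocks with respect to the Hodge involution $J$ of the excerpt (here $J^2=\mathrm{id}$, so $\Lambda^\pm$ are the real $\pm1$-eigenbundles, each of rank three), and observe that for an Einstein metric the mixed (trace-free Ricci) blocks vanish, so that $R$ preserves $\Lambda^\pm$ and $R^+=W^++\frac{s}{12}\mathrm{Id}$ with $W^+$ the self-dual Weyl operator and $s$ the scalar curvature, which is constant by the contracted Bianchi identity. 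Consequently $W^+$ is trace-free, self-adjoint, and inherits complex-diagonalizability with constant eigenvalues, while $\nabla R^+\neq0$ is the same as $\nabla W^+\neq0$. The crucial structural point is that in neutral signature the metric induced on $\Lambda^+$ is indefinite (Lorentzian, of signature $(1,2)$), so $\mathrm{so}(\Lambda^+)\cong\mathrm{so}(1,2)$; a self-adjoint operator on such a space need not be diagonalizable over $\R$, which is exactly why "complex-diagonalizable" is the correct hypothesis and why the algebraic normal forms of $W^+$ reproduce Petrov's types.

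Next I would extract the differential constraint. The second Bianchi identity together with $\nabla\mathrm{Ric}=0$ forces $W^+$ to be divergence-free, $\delta W^+=0$, which in operator terms says that $\nabla W^+$ is a Codazzi-type object valued in $\Lambda^+$. The heart of the argument is then the rigidity lemma for self-adjoint operators with constant eigenvalues: decomposing (the complexification of) $\Lambda^+$ into the eigenbundles of $W^+$, the Codazzi/divergence-free condition combined with constancy of the eigenvalues pins down the off-diagonal connection one-forms between eigenbundles in terms of the eigenvalue gaps alone. If the three eigenvalues were pairwise distinct this would force $\nabla W^+=0$, contradicting the hypothesis; hence $\nabla W^+\neq0$ can occur only at a degenerate algebraic type, namely a repeated eigenvalue compatible with complex-diagonalizability (the analogue of Petrov type $\mathrm{D}$). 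In that degenerate case the surviving structure equations carry constant coefficients, and tracking the Einstein condition through them forces the scalar curvature to vanish, so the metric is Ricci-flat.

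The final and hardest step is to integrate these constant-coefficient Cartan structure equations. Using a distinguished frame of $\Lambda^+TM$ adapted to the eigenstructure of $W^+$ and lifting it to a coframe on $M$, I would show that the connection forms are constant linear combinations of the coframe, so that the structure functions are genuinely constant; Cartan's criterion then exhibits $(M,g)$ as locally isometric to a Lie group with a left-invariant metric, giving local homogeneity, and reading off the admissible degenerate eigenvalue normal forms identifies the resulting Ricci-flat metrics with Petrov's family. The main obstacle is precisely this closing-up: one must verify that the Bianchi identity and the constant-eigenvalue hypothesis leave no free functions in the structure equations, so that the adapted frame really generates a finite-dimensional Lie algebra of Killing fields rather than a genuinely inhomogeneous solution. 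This is the technical core of Derdzinski's analysis, and it is where the constancy of the eigenvalues together with the indefiniteness of $\Lambda^+$ is used decisively.
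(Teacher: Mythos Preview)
The paper does not prove this theorem. Theorem~\ref{derd} is quoted verbatim from Derdzinski's paper \cite{derd} (note the \texttt{\textbackslash cite\{derd\}} attached to the theorem environment) and is used as a black box in the proof of Theorem~\ref{theo2}. There is therefore no ``paper's own proof'' to compare your proposal against; the authors simply invoke the result.

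That said, your outline is broadly in the spirit of Derdzinski's argument, but it is a sketch rather than a proof, and one step is suspect as stated. Your claim that three pairwise distinct constant eigenvalues of $W^+$ together with $\delta W^+=0$ force $\nabla W^+=0$ is not obvious in the indefinite setting: when the induced metric on $\Lambda^+$ has Lorentzian signature, eigenspaces can be null or the eigenvalues can be a complex-conjugate pair plus one real, and the usual Riemannian Codazzi-type argument (differentiating the eigenvalue equations and pairing against eigenvectors) does not close up without a careful case analysis of the possible signatures of the eigenspaces. Derdzinski's paper handles exactly this case-by-case analysis, and the Petrov Ricci-flat examples that survive are not all of type~$\mathrm{D}$ in the naive sense; the admissible normal forms in neutral signature are richer than your sketch suggests. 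If you want to supply a proof here rather than a citation, you would need to reproduce that casework, not just the type-$\mathrm{D}$ branch.
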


    \paragraph{Proof of Theorem\ref{theo2}}
       $   $
   
     Let $(M,g)$ be a four dimensional Einstein  semi-symmetric neutral manifold.  Then  the Ricci tensor  satisfies the following relationship:
 \begin{equation}
  \mathfrak{ ric}=\frac{\mathfrak{s}}{4}g,
 \end{equation} such that $\mathfrak{s}$ is the scalar curvature.
 
On the other way, the Ricci tensor  satisfies the following relationship:

            \begin{equation}\label{besse(ds)}
                         \delta(\mathfrak{ric})=-\frac{1}{2}d (\mathfrak{s}),
                           \end{equation}
                           where  $\delta$ and $d$  are the contravariant and the covariant differential on $M$  respectively  (See \cite{besse},Proposition1.94, page 43).\\
                          This induces that the scalar curvature  $\mathfrak{s}$  is a constant   function.

 So, $(M,g)$ is  semi-symmetric  and according of the  corollary\ref{co4-21} we get that  morphism  $R^+ :  \Lambda^+ TM  \too \Lambda^+ TM $ is  diagonalizable and it satisfies of one of  the following  situations:\begin{enumerate}
                                                              \item ${R^+}$  is  an homothety with a report  $\frac{\mathfrak{s}}{4}$,
                                                              \item ${R^+}$  is diagonalizable as $\C$-linear endomorphism of $\wedge^+ T_pM$ with eigenvalues 0 and $-\frac\la4$  of multiplicity $2$ and $1$ respectively,  where $\la$ is the scalar curvature.  
                                                            \end{enumerate}
    Then $M$ admits a   no  flat Ricci.  According to theorem\ref{derd},  we get  that $M$ is localy  symmetric.  This completes the proof of the theorem.\ref{theo2}.\\


    \paragraph{Proof of Theorem \ref{main}} 
    $   $
    
\begin{proof}
Let $(M,g)$ be a four dimensional simply connected homogeneous  neutral  semi-symmetric manifold with Ricci curvature having a non zero eigenvalue. According to  proposition\ref{pr43}. If the  Ricci curvature has a non zero eigenvalue in $\C$, Then, $M$ is one of the following   types: $(S4\la)$, $(S4\la\mu)$,   $(S4\la0^1)$,  $(S4\la0^2)$ or  $(Sz\bar{z})$.
 
 \begin{Le}\label{lemma61}
Let $(M,g)$ be a four dimensional simply connected homogeneous  neutral  semi-symmetric manifold of type $(S4\la\mu)$ or  $(S4\la0^1)$ or  $(S4\la0^2)$ then $(M,g)$ is either Ricci-parallel or locally isometric to a Lie group equipped with a left invariant neutral metric.
\end{Le}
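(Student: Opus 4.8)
The plan is to separate the type $(S4\la\mu)$ with two nonzero Ricci eigenvalues from the two types $(S4\la0^1)$, $(S4\la0^2)$ which carry a nontrivial Ricci kernel.

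For type $(S4\la\mu)$ the argument is essentially formal. Here the tangent space splits as $T_mM=V_\mu(m)\oplus V_\la(m)$ with $\dim V_\mu=\dim V_\la=2$ and $V_0=0$, so Proposition \ref{pr121} degenerates to $\na_X V_\la\subset V_\la$ and $\na_X V_\mu\subset V_\mu$ for every vector field $X$: both Ricci eigendistributions are parallel and non-degenerate, and $V_\mu=V_\la^{\perp}$. The local de~Rham--Wu splitting theorem then gives a local isometry $(M,g)\cong(M_\la,g_\la)\times(M_\mu,g_\mu)$ with each $M_\nu$ a surface. Since the Ricci tensor of a product is the direct sum of the Ricci tensors of the factors and $\Ri|_{V_\nu}=\nu\,g|_{V_\nu}$, each $M_\nu$ is an Einstein surface with Einstein constant $\nu$; in dimension two this forces its Gaussian curvature to equal the constant $\nu$, so $M_\nu$ has constant curvature and is locally symmetric. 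Hence $(M,g)$ is locally symmetric, and in particular Ricci-parallel, which is the first alternative of the lemma.

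For the types $(S4\la0^1)$ and $(S4\la0^2)$ there is no product splitting: now $V_0\ne 0$ (of dimension $1$, resp. $2$), $T_mM=V_0(m)\oplus V_\la(m)$ orthogonally, and Proposition \ref{pr121} yields only $\na_{V_0}V_0\subset V_0$ and $\na_{V_0}V_\la\subset V_\la$ (the relations $\na_{V_i}V_i\subset V_0+V_i$ are vacuous when there are just two summands), so $V_0$ is totally geodesic but in general not parallel. Here I would pass to a homogeneous model $M=G/H$, with $G$ a connected group acting transitively by isometries, $H$ the connected isotropy, and a reductive decomposition $\G=\h\oplus\mathfrak{m}$ with $\mathfrak{m}$ identified with $T_oM$. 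The orthogonal splitting $\mathfrak{m}=\mathfrak{m}_0\oplus\mathfrak{m}_\la$ is $\Ad(H)$-invariant, since it is read off from the Ricci operator, and both summands are non-degenerate, so $\h$ is a subalgebra of $\mathfrak{so}(\mathfrak{m}_0)\oplus\mathfrak{so}(\mathfrak{m}_\la)\subset\mathfrak{so}(2,2)$, and it normalizes the holonomy algebra $\mathfrak{h}(R)$, which Proposition \ref{pr43} determines from the shape of $\Ri$. Using Komrakov's classification of subalgebras of $\mathfrak{so}(2,2)$ \cite{komrakov} to list the finitely many candidates for $\h$, one writes the homogeneous Levi-Civita product on $\mathfrak{m}$ (the Nomizu operator) in terms of the structure constants, imposes the semi-symmetry identity \eqref{semi} together with the constraints $\mathfrak{m}_0.\mathfrak{m}_0\subset\mathfrak{m}_0$ and $\mathfrak{m}_0.\mathfrak{m}_\la\subset\mathfrak{m}_\la$ coming from Proposition \ref{pr121}, and checks case by case that either $\na R=0$ — so $(M,g)$ is Ricci-parallel — or $\h$ admits a subalgebra complement in $\G$, in which case a connected subgroup of $G$ acts simply transitively and $(M,g)$ is locally isometric to that Lie group with a left-invariant neutral metric. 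This is precisely the scheme carried out for Lorentzian signature in \cite{benromane}; since it uses only curvature identities valid in arbitrary signature together with Komrakov's list, it transfers to the neutral case.

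The hard part is this second situation. Unlike type $(S4\la\mu)$, the Ricci eigendistributions are not parallel, so no de~Rham reduction is available, and one must genuinely enumerate the admissible isotropy representations on $\mathfrak{m}_\la$ together with the compatible structure constants, and for each nonzero candidate isotropy either exhibit a simply transitive complement or deduce $\na R=0$. Making this finite case analysis exhaustive — which is where Komrakov's classification and the computer-assisted computations announced in the introduction enter — is the real content of the lemma.
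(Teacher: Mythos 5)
Your treatment of type $(S4\la\mu)$ is correct and in fact more self-contained than the paper's: with $V_0=\ker(\Ri^2)=0$, Proposition \ref{pr121} does make both eigendistributions parallel and non-degenerate, the local de Rham--Wu splitting applies, and by homogeneity each two-dimensional factor is Einstein with constant $\la$ (resp.\ $\mu$), hence of constant curvature; so $(M,g)$ is locally symmetric and in particular Ricci-parallel. Be aware that the paper does not argue this way: its whole proof of the lemma consists in observing that the proofs of Theorems 4.1 and 4.6 of Calvaruso--Zaeim \cite{calvaruso3}, which rest on Komrakov's classification \cite{komrakov}, remain valid for all three types.

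The genuine gap is in your second case. For $(S4\la0^1)$ and $(S4\la0^2)$ you only describe a programme --- take a reductive model $G/H$, constrain the isotropy via Komrakov's list, write the Nomizu operator, impose semi-symmetry and the inclusions of Proposition \ref{pr121}, and check case by case that one gets either $\na R=0$ or a subalgebra complementary to $\h$ --- and you say yourself that making this enumeration exhaustive ``is the real content of the lemma.'' That content is exactly what is missing: you verify no Komrakov pair, and you do not cite a place where the verification has been carried out, which is precisely how the paper closes this case (Theorem 4.1 of \cite{calvaruso3} for $(S4\la0^2)$, Theorem 4.6 for $(S4\la0^1)$). Note also that your proposed dichotomy ``$\na R=0$ or a complementary subalgebra exists'' is stronger than the assertion to be proved (Ricci-parallel or locally a Lie group); a priori some admissible pairs could be Ricci-parallel without being curvature-parallel, so even as a plan the case analysis must allow that outcome. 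As it stands, your argument establishes the lemma for $(S4\la\mu)$ but not for $(S4\la0^1)$ and $(S4\la0^2)$; either carry out the finite case-by-case computation or appeal explicitly to \cite{calvaruso3}.
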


 \textbf{Proof of Lemma \ref{lemma61}}

If  $M$ is one of the following types $(S4\la \mu)$ or  $(S4\la0^2 )$, the proof of  theorem 4.1  in  \cite{calvaruso3}   remains valid in the current situation. For the case  $(S4\la0^1)$  and according to \cite{komrakov}, we find the same homogeneous   manifolds as the theorem 4.6 in \cite{calvaruso3} and consequently its proof remains valid in the current situation. This completes the proof of the Lemma\ref{lemma61}.$\blacksquare$

 So if   $(M,g)$ is of type  $(S4\la)$ such that $\la\neq0$, that  is, $M$ is the Einstein space with non null scalar curvature  and we can apply Theorem \ref{theo2} to get that $M$ is locally symmetric.

 If $(M,g)$ is Ricci-parallel and the Ricci operator has two distinct real eigenvalues then, according to Theorem 7.3  \cite{boubel} and the Proposition\ref{pr121} , $(M,g)$ is a product of two Einstein homogeneous semi-symmetric pseudo-Riemannian manifolds of dimension less or equal 3 and according the some results of same situation in  \cite{benromane} we get that  $(M,g)$ is localy symmetric.
  
  If $M$ is  locally isometric to a Lie group  equipped with a left invariant neutral metric, we have shown in section \ref{section4bis} that $M$ is locally symmetric.

 Suppose now that   $(M,g)$ is of type $(Sz\bar{z})$.   Let $z=a+ib$ and   $\bar{z}=a-ib$  be the eigenvalues of   $\Ri$ such that  $b\neq0$. Then there is a  pseudo-orthonormal frame  $\B=(e,f,u,v)$ such that  $g(e,u)=g(f,v)=1$ in which the matrix of  $\Ri$ has the following form:  
\[ [\Ri]_{\B}=\left(
    		\begin{array}{cccc}
    		a& -b &0 & 0 \\
    		b & a & 0 & 0 \\
    		0 & 0 & a & -b \\
    		0 & 0 & b &a \\
    		\end{array}
    		\right)\]
Let $J$ be the  operator given by $J:=\frac{1}{b} (\Ri-a.I)$.  We have  $J^2=- I$ such that $I$ is the identity  operator of the tangent fibre of $M$. Then, by complexification, we get that the complex semi-symmetric manifolds $(M^{\mathbb{C}},g^{\mathbb{C}})$  which   $\Ri^{\mathbb{C}} $ the complex Ricci  is semi-symmetric operator admitting two eigenvalues  $z=a+ib$ and    $\bar{z}=a-ib$ which are the  constant functions because $M$ is homogeneous. Moreover, $\Ri^{\mathbb{C}} $    must be diagonalizable in  ${\mathbb{C}}$.  More precisely, $\ker(\Ri^{\mathbb{C}} -z.I)=\ker(J -i.I)$ and  $\ker(\Ri^{\mathbb{C}} -\bar{z}.I)=\ker(J +i.I)$.  Applying the procedure of the proof of the proposition\ref{pr121}, we find that the two two-dimensional  orthogonaly eigenspaces of $ \Ri^{\mathbb {C}} $ are parallel and consequently, they are locally symmetric. So $ (M ^ {\mathbb {C}}, g ^ {\mathbb {C}}) $ is locally symmetric. As a result, $ (M, g) $ is locally symmetric. This completes the proof of the Teorem\ref{main}.
\end{proof}

    
    \section{Four-dimensional Ricci flat and   Ricci isotropic homogeneous semi-symmetric neutral manifolds } \label{section7}
    
    In this section, we deal with non flat semi-symmetric  four-dimensional neutral manifolds with ithe Ricci curvature is either isotropic  or flat . 
    
    We use Komrakov's classification \cite{komrakov} of four-dimensional homogeneous pseudo-Riemannian manifolds and we apply the following algorithm to find among Komrakov's list the pairs $(\overline{\G},\G)$ corresponding to    four-dimensional Ricci flat or   Ricci isotropic homogeneous semi-symmetric neutral manifolds which are not locally symmetric.

    Let $M=\overline{G}/G$ be an homogeneous manifold with $G$ connected and $\overline{\G}=\G\oplus\mathfrak{m}$, where $\overline{\G}$ is the Lie algebra of $\overline{G}$, $\G$ the Lie algebra of $G$ and $\mathfrak{m}$  an arbitrary complementary of $\G$ (not necessary $\G$-invariant). The pair $(\overline{\G},\G)$ uniquely defines the isotropy representation $\rho:\G\too\mathrm{gl}(\mathfrak{m})$ by $\rho(x)(y)=[x,y]_\mathfrak{m}$, for all $x\in\G$, $y\in\mathfrak{m}$. Let $\{e_1,\ldots,e_r,u_1,\ldots,u_n \}$ be a basis of $\overline{\G}$ where $\{e_i\}$ and $\{u_j \}$ are bases of $\G$ and $\mathfrak{m}$, respectively. The algorithm goes as follows.
    
    \begin{enumerate}\item Determination of invariant pseudo-Riemannian metrics on $M$:
		
		It is well-known that invariant pseudo-Riemannian metrics on $M$ are in a one-to-one correspondence with nondegenerate invariant symmetric bilinear forms on $\mathfrak{m}$. A symmetric bilinear form on $\mathfrak{m}$ is determined by its matrix $B$ in $\{u_i\}$ and its invariant if $\rho(e_i)^t\circ B+B\circ\rho(e_i)=0$ for $i=1,\ldots,r$.
    	\item Determination of the Levi-Civita connection: 
			
			Let $B$ be a nondegenerate invariant symmetric bilinear form on $\mathfrak{m}$. It defines uniquely an invariant linear Levi-Civita connection $\na:\bar{\G}\too\mathrm{gl}(\mathfrak{m})$ given by
    	\[ \na(x)=\rho(x),\;\na(y)(z)=\frac12[y,z]_\mathfrak{m}+\nu(y,z),\; x\in\G, y,z\in\mathfrak{m}, \] where $\nu:\mathfrak{m}\times\mathfrak{m}\too\mathfrak{m}$ is given by the formula
    	\[ 2B(\nu(a,b),c)=B([c,a]_\mathfrak{m},b)+B([c,b]_\mathfrak{m},a),\;a,b,c\in\mathfrak{m}. \]
    	\item Determination of the curvature: 
			
			The curvature of $B$ is the bilinear map
    	$\Ku:\mathfrak{m}\times\mathfrak{m}\too\mathrm{gl}(\mathfrak{m})$ given by
    	\[ \Ku(a,b)=[\na(a),\na(b)]-\na([a,b]_\mathfrak{m})-\rho([a,b]_{\G}),\; a,b\in\mathfrak{m}. \]
    	\item Determination of the Ricci curvature:
			
			It is given by its matrix in $\{ u_i\}$, i.e., $\ric=(\ric_{ij})_{1\leq i,j\leq n}$ where
    	\[ \ric_{ij}=\sum_{r=1}^n\Ku_{ri}(u_r,u_j). \]
    	\item Determination of the Ricci operator: 
			
			We have $\Ri=B^{-1}\ric$.
    	\item Checking  the semi-symmetry condition. 
    	
    \end{enumerate} 
    The following theorem gives the list of four dimensional homogeneous  neutral  semi-symmetric  manifolds non flat  which Ricci is either isotropic or flat. 
    
   \begin{theo}\label{main3} Let  $M=\bar{G}/G$  be a four dimensional  homogeneous neutral  semi-symmetrique no symmetric manifolds   which Ricci is either isotropic or flat.\\  Let $\bar{\G}=span\{e_1,..,e_n,u_1,u_2,u_3,u_4\}$ and  $\G=span\{e_1,..,e_n\}$ the  Lie  algebras   associted respectively to  $\bar{G}$ and  $G$.  Then,  $M$ is isometric to one of the following  types:
   \begin{enumerate}\item[I)] $\bar{\G}=span\{e_1, u_1,u_2,u_3,u_4\}$;   \begin{enumerate}  
   \item  $\langle u_1,u_4\rangle=-\langle u_2,u_3\rangle=a$,    $\langle u_3,u_3\rangle=-\langle u_4,u_4\rangle=b$,       $\langle u_3,u_4\rangle=c$\\ 
  ${\mathbf{1.3}^1}:2,3,4,6, 7,10,15,16, 24,26-30$,\\
   ${\mathbf{1.3}^1}:5 $ with  $(\la,\mu)\neq(0,2),$\\
  ${\mathbf{1.3}^1}:8,19,20,22$ with  $b\neq0$,\\
  ${\mathbf{1.3}^1}:9 $ with  $b\la(\la+1)\neq0,$\\
   ${\mathbf{1.3}^1}:12 $ with  $(\la-\mu-1)(\la-\mu+1)\neq0,$\\
  ${\mathbf{1.3}^1}:13 $ with  $\la\neq\frac{1}{2}$ ,\\
    $ {\mathbf{1.3}^1}: 21  $ with  $b\la(\la-1)\neq0,$\\
   ${\mathbf{1.3}^1}:25 $ with  $(b,\la)\neq(0,2),$\\
   \item  ${\mathbf{1.4}^1}:$    with  $\langle u_1,u_3\rangle=-\langle u_2,u_2\rangle=a,~~ \langle u_3,u_3\rangle=b,~\langle u_3,u_4\rangle=d,~\langle u_4,u_4\rangle=a-b$,\\ 
         ${\mathbf{1.4}^1}:2$    with  $b\neq0$ and $p=1$,\\
  ${\mathbf{1.4}^1}:9-11,13,15-20$.
\end{enumerate}
\item[II)]  $\bar{\G}=span\{e_1,e_2,u_1,u_2,u_3,u_4\}$ with  $\langle u_1,u_3\rangle=\langle u_2,u_4\rangle=a$;\begin{enumerate}
\item    
             ${\mathbf{2.2}^1}:2$ with $\la(\la^2-4)\neq0$  
             \item  ${\mathbf{2.2}^1}:3$.  
          \item
            ${\mathbf{2.5}^1}:3-6$.\\
 \end{enumerate}
\item[III)]  $\bar{\G}=span\{e_1,e_2,e_3,u_1,u_2,u_3,u_4\}$; \\
   ${\mathbf{3.3}^1}:1$  with $\langle u_1,u_3\rangle=\langle u_2,u_4\rangle=a$: \\
\end{enumerate}
\end{theo}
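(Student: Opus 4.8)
The plan is to run the six-step algorithm described above over Komrakov's classification \cite{komrakov} of four-dimensional homogeneous pseudo-Riemannian manifolds of neutral signature, keeping exactly those pairs $(\bar{\G},\G)$ which are semi-symmetric, whose Ricci operator satisfies $\Ri^2=0$, and which are not locally symmetric. The case of an (effectively) trivial isotropy, i.e.\ a Lie group with a left-invariant neutral metric, has already been treated in Section~\ref{section4bis} and in \cite{ali}, so we may assume $\G\neq\{0\}$.

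First I would cut down the search space. At a base point $o$ the curvature $R_o$ is a semi-symmetric algebraic curvature tensor, and since $M$ has $\Ri^2=0$ everywhere, $R_o$ is a non-flat semi-symmetric curvature tensor on the neutral space $T_oM$ with $\Ri_{R_o}^2=0$. By Corollary~\ref{co4-21}(2)--(3) and Proposition~\ref{isotp1}, its primitive holonomy algebra $\h(R_o)$ is then $O(2,2)$-conjugate to one of the low-dimensional subalgebras of $\mathrm{so}(2,2)$ appearing there (types $1.3^1$, $1.4^1$, $2.2^1$, $2.2^2$, $2.5^1$). On the other hand $R_o$ is invariant under the linear isotropy representation $\rho:\G\to\mathrm{so}(2,2)$; by the definition of the $\G$-action on $P$ this says $[A,R_o(u,v)]=R_o(Au,v)+R_o(u,Av)\in\h(R_o)$ for all $A\in\rho(\G)$, hence $\rho(\G)$ normalizes $\h(R_o)$. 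Comparing the normalizers in $\mathrm{so}(2,2)$ of these subalgebras with Komrakov's tables restricts the admissible pairs to those with $1\le\dim\G\le3$ and with isotropy type $1.3^1$ or $1.4^1$ when $\dim\G=1$, $2.2^1$ or $2.5^1$ when $\dim\G=2$, and $3.3^1$ when $\dim\G=3$; this is precisely the partition into the types I, II, III of the statement.

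Next, for each of the finitely many remaining pairs $(\bar{\G},\G)$ in Komrakov's list I would apply the algorithm verbatim: solve $\rho(e_i)^t\circ B+B\circ\rho(e_i)=0$ for the invariant symmetric bilinear forms $B$ of signature $(2,2)$ (this yields the normalization of $u_1,u_2,u_3,u_4$ and the parameters $a,b,c,d$ appearing in the statement); compute the Levi-Civita connection through $\nu$, then $\Ku=R_o$, then $\ric$ and $\Ri=B^{-1}\ric$; impose the semi-symmetry identity \eqref{semi} for $\Ku$; impose $\Ri^2=0$; and finally discard the pairs with $\na R=0$. Each of these is a polynomial condition on the structure constants of $\bar{\G}$, on $a,b,c,d$, and on the internal parameters ($\la,\mu,p,\ldots$) occurring in Komrakov's families; solving the resulting systems leaves exactly the parameter ranges recorded in the statement. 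For instance the exclusions $(\la,\mu)\neq(0,2)$ in ${\mathbf{1.3}^1}{:}5$, $b\neq0$ in ${\mathbf{1.3}^1}{:}8,19,20,22$, $\la\neq\frac12$ in ${\mathbf{1.3}^1}{:}13$, and so on, are the conditions that separate the genuinely non-symmetric members of a family from its flat or locally symmetric degenerations. One also checks that the surviving pairs give pairwise non-isometric manifolds.

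The real obstacle is the volume and delicacy of these computations rather than any isolated conceptual point: Komrakov's list contains many pairs for each admissible isotropy type, the invariant-metric and Levi-Civita steps produce large symbolic expressions, and the combined system \eqref{semi} together with $\Ri^2=0$ and $\na R\neq0$ must be solved case by case. As announced in the introduction, these computations are carried out with a computer algebra system, and the output is exactly the list displayed in the theorem.
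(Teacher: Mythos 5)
Your proposal follows essentially the same route as the paper: the paper's own argument for Theorem \ref{main3} is precisely the six-step algorithm of Section \ref{section7} run over Komrakov's list with computer assistance, imposing invariance of the metric, semi-symmetry, $\Ri^2=0$ (isotropic or flat Ricci), and non-symmetry, which is what you describe; your preliminary reduction via the holonomy types of Corollary \ref{co4-21} and Proposition \ref{isotp1} is a reasonable way to organize the same case-by-case search. Since the substance in both cases is the symbolic verification over Komrakov's tables, your plan matches the paper's proof.
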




\end{document}